\let\ORIlabel\label
\let\ORIrefstepcounter\refstepcounter
   \let\label\ORIlabel 
   \let\refstepcounter\ORIrefstepcounter}
\def\CT{{\mathcal T}}
\newcommand{\C}{\mathbb{C}}
\newcommand{\K}{\mathbb{K}}
\newcommand{\ten}[1]{\mathbf{#1}}
\def\O{\Omega}
\renewcommand\sp{\mathop{\mathrm{Sp}}\nolimits}
\def\hdel{\widehat{\delta}}
\newcommand\cT{\mathcal{T}}
\def\CT{{\mathcal T}}
\newcommand\R{\mathbb{R}}
\renewcommand\O{\Omega}
\newcommand{\vertiii}[1]{{\left\vert\kern-0.25ex\left\vert\kern-0.25ex\left\vert #1 
		\right\vert\kern-0.25ex\right\vert\kern-0.25ex\right\vert}}
\renewcommand\sp{\mathop{\mathrm{sp}}\nolimits}
\crefname{hypothesis}{Hypothesis}{Hypotheses}
\title{Finite element analysis  of an  eigenvalue problem arising from neutron transport \thanks{
\funding{NAB was supported by Centro de Modelamiento Matematico (CMM), Proyecto Basal FB210005, and by ANID Postdoctoral Proyecto 3230326. FL was supported by Universidad del B\'io-B\'io through  Proyecto Regular RE2514703.
}}}
\author{Nicol\'as Barnafi\thanks{
Instituto de Ingenier\'ia Matem\'atica y Computacional \& Facultad de Ciencias Biol\'ogicas, Pontificia Universidad
Cat\'olica de Chile, Av Vicu\~{n}a Mackenna 4860, Santiago, Chile, and Center for Mathematical Modeling, Santiago, Chile, \email{nicolas.barnafi@uc.cl}.}
\and Felipe Lepe\thanks{GIMNAP-Departamento de Matem\'atica, Universidad del B\'io - B\'io, Casilla 5-C, Concepci\'on,
Chile, \email{flepe@ubiobio.cl}.}
\and Francisca Mu\~{n}oz\thanks{Instituto de Ingenier\'ia Matem\'atica y Computacional, Pontificia Universidad
Cat\'olica de Chile, Av Vicu\~{n}a Mackenna 4860, Santiago, Chile, and School of Industrial and Information Engineering, Politecnico di Milano, Piazza Leonardo da Vinci, 32 - 20133 Milano,  \email{fmur@uc.cl}.}}
\begin{document}

\maketitle

\begin{abstract}
In two and three dimensions, we analyze a finite element method to approximate the solutions of an eigenvalue problem arising from neutron transport. We derive the eigenvalue problem of interest, which results to be non-symmetric. Under a standard finite element approximation based on piecewise polynomials of degree $k\geq 1$, and under the framework of the compact operators theory, we prove convergence and error estimates of the proposed method. We report a series of numerical tests in order confirm the theoretical results.
\end{abstract}

\begin{keywords}
	Non-symmetric eigenvalue problem,  convergence, a priori error estimates
\end{keywords}

\section{Introduction}\label{sec:intro}
Neutron transport models describe the behavior of neutrons within a given domain and their interactions with the medium, involving parameters such as direction, energy, position and time.  These models are formulated from a balance of neutron density in time, which results in a formulation in space, time, and energy. To reduce the complexity given by the energy domain, the resulting system is commonly quantized into energy groups, with the most common model being the one with two groups (fast and thermal). We use that model in this work. We provide a systematic and detailed derivation of the multigroup equation, which is not available in the literature to the best of the authors knowledge. These models have applications in the functioning of pressurized water reactors, boiling water reactors, natural uranium gas graphite reactors, and many more. We refer to the reader to  \cite{reactorbook} for details on these subjects. Still, their mathematical and approximation properties remains understudied.

The time dependent formulation of the Neutron Transport Equation involves several variables associated with neutron production and loss rates mechanisms. Although the model is inherently complex, under nuclear reactors conditions it can be simplified by restricting the analysis to a bounded spatial domain with specific conditions. By studying the transient problem, we see that the neutron flux evolves in time until it reached an equilibrium state. At this point the ratio between neutron production and neutron losses stabilizes, even if it was not initially balanced. At equilibrium, this equation reduces to a stationary balance between neutron gains and losses. To achieve this equilibrium the fission source term must be scaled by a multiplication factor. This leads naturally to an eigenvalue problem that represents the system at its stationary balance, where the multiplication factor will be the eigenvalue of this problem. This eigenvalue problem results to be naturally non-selfadjoint, and its analysis must be performed according to this feature when numerical methods such as finite elements are considered.
Models related to neutron transport involve several variables in their governing PDEs. Under certain assumptions on the solutions, these models can be reduced to an eigenvalue problem. This eigenvalue problem is inherently non-selfadjoint, and its analysis must therefore account for this property, particularly when numerical methods such as finite elements are employed.

The analysis of non-selfadjoint eigenvalue problems is a well established topic in the literature, and we refer to \cite{MR366029,MR447842} as classic references related on this topic. These articles show how non-symmetric eigenvalues can be analyzed, and have led  to several  applications such as \cite{MR4368386, MR4930005,MR3259022,MR2845628,MR4728079,MR4846353}, among others. Here, the lack of symmetry requires handling the eigenvalue in a proper manner in order to analyze the numerical methods and the corresponding convergence and error estimates.

The eigenvalue problem arising from neutron transport is non-selfadjoint and hence in this work we perform its analysis within that framework. The main ingredient that renders the model suitable for it is the compactness of both the solution operator and its adjoint, which is shown using well known regularity results of the Laplace problem. This leads to a straightforward numerical analysis based on the convergence in norm. The analysis establishes that the model can be accurately approximated with piecewise polynomials of degree $k\geq1$, and indeed this suffices to avoid generating spurious eigenvalues. Despite the fact that the engineering literature is not scarce on simulations of neutron transport under different realistic physical configurations, to the best of our knowledge the spectrum of the operator associated to the neutron transport eigenvalue problem has not been analyzed under a formal mathematical basis, and also, there are not computational results that can indicate accurately the spectrum of such an operator, namely the physical eigenvalues. Hence, this work is a novel effort to advance the mathematical structure of this problem, with the aim to continue the research on this topic with other numerical methods.

The organization of the manuscript is the following: In Section \ref{sec:model_problem} we derive the eigenvalue problem. Here, the derivation is performed under the physical problem related to the transport equation of neutrons on a reactor. One of the main ingredients for the analysis is the \emph{source problem}, which we define and analyze in Section~\ref{sec:source_problem}.  In Section~\ref{sec:eigen_problem}, we perform the analysis of the eigenvalue problem. More precisely, we present the solution operator, the adjoint source problem and show the required compactness results. Section \ref{sec:fem} contains the discrete framework in which our analysis is supported. We present the discrete eigenvalue problem and the discrete solution operators. Also, the convergence between the discrete and continuous operators is proved and and the corresponding implications are derived, such as spectral convergence, the spurious free result, and error estimates. Finally in Section \ref{sec:numerics} we report several numerical tests in two and three dimensions in order to confirm our theoretical results.

\subsection*{Notation}
We consider $L^2(\O, \K)$ the space of square-integrable functions defined on a Lipschitz bounded domain $\O$ with values on a field $\K$ ($\R$ or $\C$). When $\K$ is omitted, it implies $\R$. This space is endowed with the standard inner product
    $$
        (f, g)_0 \coloneqq (f,g)_{0,\O} = \int_\O f \overline g\,dx,
    $$ 
where $\overline{(\cdot)}$ denotes complex conjugation. We also denote by $H^1(\O)$ and $H_0^1(\O,\mathbb{C})$ the standard $H^1$ Sobolev space of real and complex functions respectively, endowed with the usual inner product given by
    $$
        (v, w)_1 \coloneqq (v,w)_{1,\O}=\int_{\O}v\overline{w}+\int_{\O}\nabla v\cdot\nabla\overline{w}\quad\forall v,w\in H_0^1(\O,\mathbb{C}).
    $$
 The norm induced by the inner products defined above will be denoted by $\|\cdot\|_{0,\O}$ and $\|\cdot\|_{1,\O}$ respectively.

\section{The model problem}
\label{sec:model_problem}

Neutron Transport modeling consist in determining the number of neutrons within a given bounded domain $\omega\subset \R^d, d\in\{2,3\}$. We introduce the neutron density $n(\vec{r},{\epsilon}, \vec{\vartheta},t)$, at position $r\in \O$, energy $\epsilon \in [E_0, E_2]$, direction $\vec \vartheta \in [0,2\pi]$  and time $t\in (0,T] $, which represents the number of neutrons per unit volume. We also introduce the neutron flux $\phi(\vec{r},{\epsilon}, \vec{\vartheta},t)$, which represents the number of neutrons passing through a unit area per unit time. For simplicity, we will neglect the direction dependence as it is of minor importance \cite{booknuc3}, as we are only interested in the amount of neutrons and not their direction. We thus redefine the neutron density and scalar flux as 
    \begin{equation*}
        n(\vec{r},\epsilon,t)=\int_{4\pi} n(\vec{r},\epsilon,\vec{d},t)\,d\vec{\vartheta}\quad
\text{and}\quad         \phi(\vec{r},\epsilon,t)=\int_{4\pi} \phi(\vec{r},\epsilon,\vec{d},t)\,d\vec{\vartheta},
    \end{equation*}
respectively. Note that integrals on "$4\pi$" are common notation in this field and they refer to integration on the directions domain. To model the neutron transport equation, we express the time variation of the neutron density, for the neutron population with a fixed energy $\epsilon$, as the difference between the neutron generation rate and neutron loss rate in a certain fixed volume \cite{booknuc3} 
\begin{equation*}
    \frac{\partial n({\epsilon},t)}{\partial t} \coloneqq \dot{N}_\text{Generation}(\epsilon,t)-\dot{N}_\text{Loss}(\epsilon,t).
\end{equation*}
Here $n$ does not depend on $\vec{r}$, since we are not interested in how many neutrons there are at each position, but only in the total number of neutron within the domain. Explicit expressions for each contribution are provided in the following sections.

\subsection{Generation rate}

We decompose the generation rate into fission and in-scattering as
\begin{equation*}
    \dot{N}_{\text{Generation}}(\epsilon,t) \coloneqq  \dot{N}_{\text{Fission}}(\epsilon,t)+ \dot{N}_{\text{In-Scattering}}(\epsilon,t).
\end{equation*}

\paragraph{Fission}
Consider the functions $\nu(\epsilon)$ which denotes the average number of neutrons produced per fission event at a certain energy, $\chi(\epsilon)$ as the birth neutron spectrum, which corresponds to the probability distribution of the energy at which neutrons are born, and   $\Sigma_f(\epsilon)$ which represents the nuclear fission cross section and gives the probability that a neutron at a certain energy undergoes a fission event. Leveraging these definitions,  $\Sigma_f(\epsilon)\phi(\vec{r},\epsilon, t)$ gives the fission reaction rate. Finally, multiplying the fission reaction rate by $\nu(\epsilon)$ and integrating over the energy and position, we obtain the total neutron generation rate by fission \cite{booknuc3},
\begin{equation*}
\int_{\Omega}\int_{E}\nu(\epsilon')\Sigma_f(\epsilon')\phi(\vec{r},\epsilon',t)\,d\epsilon'\,d\vec{r}.
\end{equation*}
Multiplying this by $\chi(\epsilon)$ gives us the total neutron generation rate by fission for neutrons at a certain energy $\epsilon$
\begin{equation*}
    \dot{N}_{\text{Fission}}(\epsilon,t) \coloneqq\chi(\epsilon)\int_{\Omega}\int_{E}\nu(\epsilon')\Sigma_f(\epsilon')\phi(\vec{r},\epsilon',t)\,d\epsilon'\,d\vec{r}.
\end{equation*}

\paragraph{In-Scattering}
This phenomenon occurs when a neutron with energy $\epsilon'$  and direction $\vec{\vartheta}'$ 
changes its energy to $\epsilon$ 
and its direction to $\vec{\vartheta}$ 
as a result of a collision. To model this process, we introduce a scattering cross section $\Sigma_s(\epsilon',\vec{\vartheta}'\to\epsilon,\vec{\vartheta})$ that represents the probability that a neutron will undergo such a state change. As before, we consider only the average among all directions, thus we set
\begin{equation*}
\Sigma_s(\epsilon'\to\epsilon) \coloneqq \int_{4\pi}\int_{4\pi}\Sigma_s(\epsilon',\vec{\vartheta}'\to\epsilon,\vec{\vartheta})\,d\vec{\vartheta}\,d\vec{\vartheta'}.
\end{equation*}
By multiplying the scattering cross section $\Sigma_s(\epsilon'\to\epsilon) $ by the neutron flux $\phi(\vec{r},\epsilon')$, we obtain the in-scattering generation rate for neutrons with initial energy $\epsilon'$ that scatter into energy $\epsilon$. Integrating this term over energy $\epsilon'$ and position $\vec{r}$ in the domain yields the total number of neutrons with energy $\epsilon$ generated by scattering \cite{booknuc},

\begin{equation*}
 \dot{N}_{\text{In-Scattering}}(\epsilon,t) \coloneqq\int_{\Omega}\int_E\Sigma_s(\epsilon'\to\epsilon) \phi(\vec{r},\epsilon',t)\,d\epsilon'\,d\vec{r}. 
\end{equation*}

\subsection{Loss rate} 
The neutron loss rate is given by
\begin{equation*}
     \dot{N}_{\text{Loss}}(\epsilon,t) \coloneqq \dot{N}_{\text{Leakage}}(\epsilon,t)+ \dot{N}_{\text{Absorption}}(\epsilon,t) +\dot{N}_{\text{Out-Scattering}}(\epsilon,t).
\end{equation*}

\paragraph{Leakage}
This occurs when the neutron flux exits the domain. We introduce the neutron current $\boldsymbol{J}(\vec{r},\epsilon,t)$ as the vector quantity of neutrons passing per unit area and unit time for a specific direction $\vec\vartheta$. 

At a specific place in the domain boundary, energy and time, the leakage can be expressed as $\boldsymbol{J}(\vec{r},\epsilon,t)\cdot \vec{n}$, where $\vec{n}$ is the outward unit normal vector. Integrating over the  boundary of the domain and energy gives us the total leakage
\begin{equation*}
     \int_{\partial \Omega} \int_E\boldsymbol{J}(\vec{r},\epsilon,t)\cdot \boldsymbol{n}\,d\epsilon d{S}.
\end{equation*}
Using Fick's Law, as in \cite{booknuc}, we write the following relationship between the neutron current density vector and the neutron scalar flux,
\begin{equation*}
    \boldsymbol{J}(\vec{r},\epsilon,t) \coloneqq -D(\epsilon)\nabla \phi(\vec{r},\epsilon,t).
\end{equation*}
Using divergence theorem it is possible to write the following equivalence
\begin{equation*}
    \int_{\partial \Omega}\boldsymbol{J}(\vec{r},\epsilon,t)\cdot \boldsymbol{n}\,d{S} = - \int_{\Omega}\nabla_r\cdot\boldsymbol{J}(\vec{r},\epsilon,t)\,d\Omega= \int_{ \Omega} \nabla_r \cdot D(\epsilon)\nabla_r\phi(\vec{r},\epsilon,t)\,d\vec{r},
\end{equation*}\\
so that the total leakage for neutrons with energy $\epsilon$ is,
\begin{equation*}
     \dot{N}_{\text{Leak}}(\epsilon,t) \coloneqq \int_{ \Omega} \nabla \cdot D(\epsilon)\nabla\phi(\vec{r},\epsilon,t)\,d\epsilon d\vec{r}.
\end{equation*}

\paragraph{Absorption}
To describe the absorption rate we use the absorption cross section $\Sigma_a(\epsilon)$ that represents the probability of a neutron with energy $\epsilon$ being absorbed by a nucleus. Multiplying the neutron flux by the absorption cross section and integrating over the domain yields the total absorption rate for neutron with energy $\epsilon$:
\begin{equation*}
\dot{N}_{\text{Absorption}}(\epsilon,t) \coloneqq \int_{\Omega}\int_{E}\Sigma_a(\epsilon)\phi(\vec{r},\epsilon,t)\,d\vec{r}.
\end{equation*}
\paragraph{Out-scattering}
Similar to the In-Scattering phenomenon, in the Out-Scattering we quantify the amount of neutrons that started with energy $\epsilon$ but scattered into any other energy level $\epsilon'$. To obtain the out-scattering loss rate, we multiply the scattering cross section $\Sigma_s(\epsilon'\to\epsilon)$ by the flux $\phi(\vec{r},\epsilon')$ and integrate over the energy $\epsilon'$ and position $\vec{r}$. This gives us the total number of neutrons that started with energy $\epsilon$ but scattered to another energy level as
\begin{equation*}
 \dot{N}_{\text{Out-Scatteringg}} (\epsilon,t)\coloneqq\int_{\Omega}\int_{E}\Sigma_s(\epsilon\to\epsilon') \phi(\vec{r},\epsilon, t)\,d\epsilon'\,d\vec{r}.
\end{equation*}

\subsection{Multi-group equation at steady state}
Adding the previous generation and loss rates, we obtain the following balance equation for neutron density, at a fixed energy $\epsilon$:
\begin{align*}
    \frac{\partial n(\epsilon,t)}{\partial t} &= \chi(\epsilon)\int_{\Omega}\int_{E}\nu(\epsilon')\Sigma_f(\epsilon')\phi(\vec{r},\epsilon',t)\,d\epsilon'\,d\vec{r} \\ &\quad +\int_{\Omega}\int_{E}\Sigma_s(\epsilon'\to\epsilon)\phi(\vec{r},\epsilon',t)\,d\epsilon'\,d\vec{r}  \\&\quad -   \int_{ \Omega} \nabla \cdot D(\epsilon)\nabla\phi(\vec{r},\epsilon,t)\,d\vec{r}
    \\&\quad - \int_{\Omega}\Sigma_a(\epsilon) \phi(\vec{r},\epsilon,t)\,d\epsilon\,d\vec{r}\\&\quad -\int_{\Omega}\int_{E}\Sigma_s(\epsilon\to\epsilon')\phi(\vec{r},\epsilon',t)\,d\epsilon'\,d\vec{r}.
\end{align*}

To formulate the steady state equation, we simply eliminate the time dependent term. In addition, we must introduce the multiplication factor $k$, which divides the fission side of the equation to ensure a critical state of the system by adjusting the fission term, so that there is a balance between the neutron generation rate and neutron loss rate. This results in the following:
\begin{multline}
\label{mult-factor}
    \dot{N}_{\text{Absorption}}(\epsilon,t) +  \dot{N}_{\text{Leakage}}(\epsilon,t) +\dot{N}_{\text{Out-Scattering}}(\epsilon,t)\\
    =  \dot{N}_{\text{In-Scattering}}(\epsilon,t) + \frac{1}{k} \dot{N}_{\text{Fission}}(\epsilon,t).
\end{multline}
If $k=1$ the system is critical and the amount of neutrons in the system is constant.
A value of $k<1$ means that the system is subcritical and the neutron chain reaction is decreasing. A value $k>1$ means that the system is supercritical and the neutron chain reaction is increasing. Finally, a localization argument yields the local form of \eqref{mult-factor} as follows:
\begin{equation}\label{punctual-eq}
\begin{aligned}
&\Sigma_a(\epsilon) \phi(\vec{r},\epsilon)+ \nabla \cdot D(\epsilon)\nabla\phi(\vec{r},\epsilon)+\int_{E}\Sigma_s(\epsilon\to\epsilon')\phi(\vec{r},\epsilon)\,d\epsilon'=\\ &\int_{E}\Sigma_s(\epsilon'\to\epsilon)\phi(\vec{r},\epsilon')\,d\epsilon'+ \chi(\epsilon)\frac{1}{k}\int_{E}\nu(\epsilon')\Sigma_f(\epsilon') \phi(\vec{r},\epsilon')\,d\epsilon'.
\end{aligned}
\end{equation}
This equations represents the neutron flux balance for a given fixed energy $\epsilon\in [E_0,E_2]$. 

Since the equation for a continuous energy domain is difficult to use in practice, we will work with two energy groups, $[E_0,E_1]$ and $(E_1, E_2]$. Naturally, this work can be easily extended to several energy levels.
We thus define $\phi_1$ and $\phi_2$ as follows:
\begin{equation*}
\phi_1(\vec{r}) \coloneqq \displaystyle\int_{E_{1}}^{E_2} \phi(\vec{r}, \epsilon) \, d\epsilon\quad ,\quad \phi_2(\vec{r}) \coloneqq \int_{E_{0}}^{E_1} \phi(\vec{r}, \epsilon) \, d\epsilon.
\end{equation*}
Here, $\phi_1$ corresponds to the neutron group with higher energy, referred to as the \emph{Fast group} and $\phi_2$ corresponds to the group with lower energy referred to as the \emph{Thermal group}. 

We will redefine the cross sections for each group as a weighted average, 

\begin{subequations}\label{constant-cross}
\begin{footnotesize}
\begin{align}
    \Sigma_{a1} &\coloneqq \frac{\displaystyle\int_{E_1}^{E_2}\Sigma_a(\epsilon) \phi(\vec{r},\epsilon)\,d\epsilon}{\displaystyle\int_{E_1}^{E_2}\phi(\vec{r},\epsilon)\,d\epsilon}, &
    \Sigma_{a2} &\coloneqq \frac{\displaystyle\int_{E_0}^{E_1}\Sigma_a(\epsilon) \phi(\vec{r},\epsilon)\,d\epsilon}{\displaystyle\int_{E_0}^{E_1}\phi(\vec{r},\epsilon)\,d\epsilon}, \\
    D_1 &\coloneqq \frac{\displaystyle\int_{E_1}^{E_2}D(\epsilon) \nabla \phi(\vec{r},\epsilon)\,d\epsilon}{\displaystyle\int_{E_1}^{E_2}\nabla\phi(\vec{r},\epsilon)\,d\epsilon}, &
    D_2 &\coloneqq \frac{\displaystyle\int_{E_0}^{E_1}D(\epsilon)\nabla\phi(\vec{r},\epsilon)\,d\epsilon}{\displaystyle\int_{E_0}^{E_1}\nabla\phi(\vec{r},\epsilon)\,d\epsilon}, \\
    \nu_1\Sigma_{f1} &\coloneqq \frac{\displaystyle\int_{E_1}^{E_2}\nu(\epsilon)\Sigma_f(\epsilon) \phi(\vec{r},\epsilon)\,d\epsilon}{\displaystyle\int_{E_1}^{E_2}\phi(\vec{r},\epsilon)\,d\epsilon}, &
    \nu_2\Sigma_{f2} &\coloneqq \frac{\displaystyle\int_{E_0}^{E_1}\nu(\epsilon)\Sigma_f(\epsilon) \phi(\vec{r},\epsilon)\,d\epsilon}{\displaystyle\int_{E_0}^{E_1}\phi(\vec{r},\epsilon)\,d\epsilon}, \\
    \Sigma_{1\to2} &\coloneqq \frac{\displaystyle\int_{E_1}^{E_2}\int_{E_0}^{E_1}\Sigma_s(\epsilon\to\epsilon') \phi(\vec{r},\epsilon)\,d\epsilon'\,d\epsilon}{\displaystyle\int_{E_1}^{E_2}\phi(\vec{r},\epsilon)\,d\epsilon}, &
    \Sigma_{2\to1} &\coloneqq \frac{\displaystyle\int_{E_0}^{E_1}\int_{E_1}^{E_2}\Sigma_s(\epsilon'\to \epsilon) \phi(\vec{r},\epsilon')\,d\epsilon\,d\epsilon'}{\displaystyle\int_{E_0}^{E_1}\phi(\vec{r},\epsilon')\,d\epsilon'}, \\
    \Sigma_{1\to1} &\coloneqq \frac{\displaystyle\int_{E_1}^{E_2}\int_{E_1}^{E_2}\Sigma_s(\epsilon\to\epsilon') \phi(\vec{r},\epsilon)\,d\epsilon'\,d\epsilon}{\displaystyle\int_{E_1}^{E_2}\phi(\vec{r},\epsilon)\,d\epsilon}, &
    \Sigma_{2\to2} &\coloneqq \frac{\displaystyle\int_{E_0}^{E_1}\int_{E_0}^{E_1}\Sigma_s(\epsilon'\to \epsilon) \phi(\vec{r},\epsilon')\,d\epsilon\,d\epsilon'}{\displaystyle\int_{E_0}^{E_1}\phi(\vec{r},\epsilon')\,d\epsilon'}.
\end{align}
\end{footnotesize}
\end{subequations}
In the multigroup model, we assume that all neutrons born from fission have enegry above $E_1$. Consequently, the probability that a neutron that is born by fission belongs to the fast group is one, and that it belongs to the thermal group is zero. We write this assumption as follows:
\begin{equation*}
\int_{E_0}^{E_1}\chi(\epsilon)\,d\epsilon= 0\quad \text{and}\quad \int_{E_1}^{E_2}\chi(\epsilon)\,d\epsilon=1.
\end{equation*}
\subsubsection{Fast Group}
To obtain the equation corresponding to the Fast Group, we integrate \eqref{punctual-eq} over the Fast Group energy domain $[E_1, E_2]$,
\begin{equation*}
\begin{aligned}
&\int_{E_1}^{E_2}\Sigma_a(\epsilon) \phi(\vec{r},\epsilon)\,d\epsilon+ \int_{E_1}^{E_2}\nabla_r \cdot D(\epsilon)\nabla_r\phi(\vec{r},\epsilon)\,d{\epsilon}+\int_{E_1}^{E_2}\int_{E}\Sigma_s(\epsilon\to\epsilon')\phi(\vec{r},\epsilon)\,d\epsilon'd\epsilon=\\ &\int_{E_1}^{E_2}\int_{E}\Sigma_s(\epsilon'\to\epsilon)\phi(\vec{r},\epsilon')\,d\epsilon'+ \int_{E_1}^{E_2}\chi(\epsilon)\,d\epsilon\,\frac{1}{k}\int_{E}\nu(\epsilon')\Sigma_f(\epsilon') \phi(\vec{r},\epsilon')\,d\epsilon'.
\end{aligned}
\end{equation*}We then simplify this equation using the group averaged expressions \eqref{constant-cross} to derive the following equation:
\begin{equation}
\label{fast}
    -\nabla\cdot(D_1\nabla \phi_1) + (\Sigma_{a1}+\Sigma_{1\to2})\phi_1 = \frac{1}{k}(\nu_1\Sigma_{f1}\phi_1+\nu_2\Sigma_{f2}\phi_2).
\end{equation}
We report the detailed computation of the averaged quantities in Appendix \ref{appendix-a}.
\subsubsection{Thermal Group}
To obtain the equation related  to the Fast Group, we integrate \eqref{punctual-eq} over the Thermal Group energy domain $[E_0, E_1]$,
\begin{equation*}
\begin{aligned}
&\int_{E_0}^{E_1}\Sigma_a(\epsilon) \phi(\vec{r},\epsilon)\,d\epsilon+ \int_{E_0}^{E_1}\nabla \cdot D(\epsilon)\nabla\phi(\vec{r},\epsilon)\,d{\epsilon}+\int_{E_0}^{E_1}\int_{E}\Sigma_s(\epsilon\to\epsilon')\phi(\vec{r},\epsilon)\,d\epsilon'd\epsilon=\\ &\int_{E_0}^{E_1}\int_{E}\Sigma_s(\epsilon'\to\epsilon)\phi(\vec{r},\epsilon')\,d\epsilon'+ \int_{E_0}^{E_1}\chi(\epsilon)\,d\epsilon\,\frac{1}{k}\int_{E}\nu(\epsilon')\Sigma_f(\epsilon') \phi(\vec{r},\epsilon')\,d\epsilon'.
\end{aligned}
\end{equation*}
We then simplify this equation using the group averaged expressions \eqref{constant-cross} to derive the following equation:
\begin{equation}
\label{thermal}
    -\nabla\cdot(D_2\nabla \phi_2) + \Sigma_{a2}\phi_2-\Sigma_{1\to2}\phi_1 = 0.
\end{equation}
We report the detailed computation of the averaged quantities in Appendix \ref{appendix-b}.
Coupling equations \eqref{fast} and \eqref{thermal}, we obtain the following multigroup equation
\cite{booknuc2},
\begin{subequations}\label{eq:neutron-transport-strong}
   \begin{align}
        -\nabla (D_1 \nabla \phi_1) + (\Sigma_{a1} + \Sigma_{1 \rightarrow 2}) \phi_1 &= \frac{1}{k}\left(\nu_1\Sigma_{f1} \phi_1+\nu_2\Sigma_{f2}\phi_2\right)  &\text{in } \Omega, \label{eq:strong1}\\
        -\nabla (D_2 \nabla \phi_2) + \Sigma_{a2}\phi_2  -\Sigma_{1 \rightarrow 2} \phi_1 &= 0  &\text{in } \Omega. \label{eq:strong2}\\
        \frac{\partial \phi_1}{\partial n} = -\alpha\phi_1,\qquad &  \frac{\partial \phi_2}{\partial n} = -\alpha\phi_2 &\text{in }\partial \Omega ,
    \end{align}
\end{subequations}
where $\alpha$ depends on the physical properties of the reactor and neutron medium \cite{articlenuc}.
\begin{remark}
On the forthcoming sections, we will analyze our problem under a suitable functional framework, It is important to remark the analysis of eigenvalue problems need a complete knowledge of the associated  source problems. According to this, we have to clarify the following: for the source problem it is enough to consider real Hilbert spaces, whereas for the eigenvalue problem we need complex spaces. This is due to the non-symmetry of the eigenvalue problem. Hence, and to simplify the presentation of the material, we define the following spaces $$V\coloneqq H^1(\Omega, \C)\times H^1(\Omega, \C),\quad\text{and}\quad Q\coloneqq L^2(\Omega,\C)\times L^2(\Omega, \C).$$

We will use these definitions  throughout all the manuscript, but taking into account that when we solve the source problem,  the spaces must always be defined on real numbers.

\end{remark}

\subsection{Variational Formulation}

By considering test functions $(v_1,v_2)\in V$, we multiply \eqref{eq:strong1} and \eqref{eq:strong2} and integrate by parts to obtain the following problem: Find $\lambda$ in $\C$ and $(0,0)\neq (\phi_1, \phi_2) \in V$ such that
\begin{multline}\label{eq:neutron-transport-weak}
    D_1\int_\O \nabla \phi_1\cdot \nabla v_1 + (\Sigma_{a1} + \Sigma_{1 \rightarrow 2}) \phi_1v_1 \,d\O \\
    + D_2\int_\O \nabla \phi_2\cdot \nabla v_2 + \Sigma_{a2}\phi_2v_2  -\Sigma_{1 \rightarrow 2} \phi_1 v_2\,d\O  \\
    +\int_{\partial \O}  \alpha_1\phi_1v_1 + \alpha_2\phi_2v_2\,dS = \lambda\int_\O \nu_1\Sigma_{f1}\phi_1 + \nu_2\Sigma_{f2}\phi_2\,d\O,
\end{multline}
for all $(v_1, v_2)\in V$. Let us define  the sesquilinear forms $a:V\times V\to \C$ and $b:V\times V\to \C$ given by: 
    \begin{align*}
       a((\phi_1,\phi_2),(v_1,v_2)) &= \int_\O D_1 \nabla \phi_1 \cdot \nabla \overline{v_1} + (\Sigma_{a1} + \Sigma_{1 \rightarrow 2}) \phi_1\overline{v_1} \,d\O \\
       &\quad +  \int_\O D_2 \nabla \phi_2\cdot \nabla \overline{v_2} + \Sigma_{a2}\phi_2\overline{v_2}  -\Sigma_{1 \rightarrow 2} \phi_1 \overline{v_2}\,d\O\\
       &\quad+\int_{\partial \O}  \alpha_1\phi_1\overline{v_1}+\alpha_2\phi_2\overline{v_2}\,dS  \\
       b((\phi_1,\phi_2),(v_1,v_2)) &= \int_\O(\nu_1\Sigma_{f1} \phi_1+\nu_2\Sigma_{f2}\phi_2)\overline{v_2}\,d\O.
    \end{align*}
We now write the abstract variational form of \eqref{eq:neutron-transport-strong} as follows: Find $\lambda$ in $\C$ and $(\phi_1, \phi_2)\in V$ such that
    \begin{equation}\label{eq:neutron-transport-weak-abstract}
        a((\phi_1, \phi_2), (v_1, v_2)) = \lambda b((\phi_1, \phi_2), (v_1, v_2)) \qquad\forall (v_1, v_2) \in V,
    \end{equation}
with $\lambda\coloneqq\dfrac{1}{k}$. We highlight that the smallest $\lambda$ is the most physically relevant eigenvalue, and it is always in $\R$. This fact is commonly acknowledged by the community but does not have a rigorous proof to the best of our knowledge. We establish this result in the following lemma.

\begin{remark}
    We note that this model is commonly framed with values in $\R$. Still, given its non-symmetric nature, both the eigenvalues and eigenvectors can belong also to $\C$. For this reason, we extend this formulation in Section~\ref{sec:source_problem} to $\C$ and continue the analysis within that framework.
\end{remark}

\section{The source problem}\label{sec:source_problem}
The main ingredient of the convergence theory for eigenvalue problems is the compactness of the solution operator. This is defined from the source problem, given by fixing a right hand side in the complex form of \eqref{eq:neutron-transport-weak-abstract}. In this section we provide all the theoretical ingredients for such an analysis.

Consider two functions $(f_1, f_2)\in Q\coloneqq L^2(\Omega)\times L^2(\Omega)$. The source problem is given by: Find $(\widetilde\phi_1, \widetilde\phi_2)$ in $V$ such that
    \begin{equation}\label{eq:source-problem}
        a((\widetilde\phi_1,\widetilde\phi_2),(v_1,v_2)) = b((f_1,f_2), (v_1,v_2)) \qquad\forall (v_1,v_2)\in V, 
    \end{equation}
where $a: V\times V \to\C$ and $b:Q\times V \to\C$ are two sesquilinear forms defined by 
\begin{align*}
    a((\widetilde\phi_1,\widetilde\phi_2),(v_1,v_2)) 
    &= \int_\Omega D_1 \nabla \widetilde\phi_1 \cdot \nabla \overline{v}_1 
    + (\Sigma_{a1} + \Sigma_{1 \rightarrow 2})\, \widetilde\phi_1 \overline{v}_1 \, d\Omega \\
    &\qquad + \int_\Omega D_2 \nabla \widetilde\phi_2 \cdot \nabla \overline{v}_2 
    + \Sigma_{a2}\, \widetilde\phi_2 \overline{v}_2 
    - \Sigma_{1 \rightarrow 2} \widetilde\phi_1 \overline{v}_2 \, d\Omega, \\
    &\qquad+\int_{\partial \O}  \alpha_1\widetilde\phi_1\overline{v}_1+\alpha_2\widetilde\phi_2\overline{v}_2\,dS \\
    b((f_1,f_2),(v_1,v_2))  &= \int_\Omega(\nu_1\Sigma_{f1} f_1+\nu_2\Sigma_{f2}f_2)\overline{v}_1d\Omega,
\end{align*}
i.e. the complex counterpart of the ones defining \eqref{eq:neutron-transport-weak}. We see immediately that these forms are bounded
    $$
    |a((\widetilde\phi_1,\widetilde\phi_2),(v_1,v_2))|  \leq M_a \|(\widetilde\phi_1,\widetilde\phi_2)\|_V\|(v_1,v_2)\|_V,
    $$
    where $M_a\coloneqq 2\max\{D_1,\Sigma_{a1}+\Sigma_{1\rightarrow 2},D_2,\Sigma_{1\rightarrow 2},C_1^2\alpha_1,C_2^2\alpha_2\}$ where $C_1$ and $C_2$ are constants that come from the trace inequality and 
$$
|b((f_1,f_2),(v_1,v_2)) | \leq M_b \|(f_1,f_2)\|_{Q}\|(v_1,v_2)\|_V,
$$
with $M_b \coloneqq \max\{\nu_1\Sigma_{f1}, \nu_2\Sigma_{f2}\}$. We highlight that, beyond sesquilinearity, this model is fundamentally non-symmetric as in block form it reads as
$$ \begin{bmatrix} \ten A_1 & \ten 0 \\ -\Sigma_{1\to 2}\ten I & \ten A_2 \end{bmatrix}\begin{bmatrix}\widetilde\phi_1 \\ \widetilde\phi_2 \end{bmatrix} = \begin{bmatrix} \nu_1 \Sigma_{f1} f_1 + \nu_2 \Sigma_{f2} f_2 \\ 0 \end{bmatrix}, $$
where $ \ten A_1 = -\nabla \cdot D_1 \nabla + (\Sigma_{a1} + \Sigma_{1\to 2})\ten I$ and $\ten A_2 = -\nabla \cdot D_2\nabla + \Sigma_{a2}\ten I$, with $\ten I$ the identity operator.  We start by noting that under mild conditions on the parameters (i.e. only positivity), problem \eqref{eq:source-problem} is well-posed.

\begin{lemma}\label{lemma:source-invertibility}
   Let $D_1,D_2,\Sigma_{a1},\Sigma_{a2},\Sigma_{1\to 2}$  be strictly positive constants. Then, there exists a unique solution $(\widetilde\phi_1,\widetilde\phi_2)$ in $V$ to \eqref{eq:source-problem} such that
        \begin{equation}\label{eq:phi1-apriori}
        \widetilde\alpha_1\|\widetilde\phi_1\|_1 \leq \left(\nu_1\Sigma_{f1}\|f_1\|_0+\nu_2\Sigma_{f2}\|f_2\|_0\right),
        \end{equation}
    for $\widetilde\alpha_1\coloneqq \min\{D_1,\Sigma_{a1} + \Sigma_{1\to 2}\}$, and
        \begin{equation}\label{eq:phi2-apriori} \widetilde\alpha_2\|\widetilde\phi_2\|_1 \leq \Sigma_{1\to 2}\|\widetilde\phi_1\|_0 \leq \frac{\Sigma_{1\to 2}}{\widetilde\alpha_1}\left(\nu_1\Sigma_{f1}\|f_1\|_0+\nu_2\Sigma_{f2}\|f_2\|_0\right),
        \end{equation}
    with $\widetilde\alpha_2 \coloneqq \min\{D_2, \Sigma_{a2}\}$. In particular it holds that
        \begin{equation}\label{eq:phi-apriori}
         \widetilde\alpha_1\|\widetilde\phi_1\|_1 + \widetilde\alpha_2\|\widetilde\phi_2\| \leq C_1\left(\nu_1\Sigma_{f1}\|f_1\|_0+\nu_2\Sigma_{f2}\|f_2\|_0\right),
         \end{equation}
        where $C_1\coloneqq \left(1 + \displaystyle\frac{\Sigma_{1\to 2}}{\widetilde\alpha_1}\right)$.
    \begin{proof}
        Setting $v_2=0$ in \eqref{eq:source-problem} yields the problem
        \begin{multline*}
            (D_1\nabla \widetilde\phi_1, \nabla v_1)_0 + (\Sigma_{a1}+\Sigma_{1\to 2})(\widetilde\phi_1,v_1)_0 \\= \nu_1\Sigma_{f1}(f_1, v_1)_0 + \nu_2\Sigma_{f2}(f_2,v_1)_0 \quad\forall v_1\in H^1(\O, \C),\end{multline*}
        which has a unique solution $\widetilde\phi_1$ in $H^1(\Omega, \C)$ in virtue of Lax-Milgram's lemma. We further obtain the \emph{a-priori} bound
        $$ D_1\|\nabla \widetilde\phi_1\|_0^2 + (\Sigma_{a1} + \Sigma_{1\to 2})\| \widetilde\phi_1\|_0^2 \leq \| f_1 \|_0\|\widetilde\phi_1\|_0,$$ which gives \eqref{eq:phi1-apriori}. Given the existence and uniqueness of $\widetilde\phi_1$, we now set $v_1=0$ in \eqref{eq:source-problem} to get the problem 
        $$ (D_2\nabla\widetilde\phi_2, \nabla v_2)_0 + \Sigma_{a2}(\widetilde\phi_2, v_2)_0 = \Sigma_{1\to 2}(\widetilde\phi_1, v_2) \qquad\forall v_2 \in H^1(\O, \C).$$
        Again, Lax-Milgram guarantees the existence and uniqueness of a $\widetilde\phi_2$ in $H^1(\O,\C)$ such that
        $$ D_2\|\nabla\widetilde\phi_2\|_0^2 + \Sigma_{a2}\|\widetilde\phi_2\|_0^2 \leq (\|f_2\|_0+ \|\widetilde\phi_1\|_0)\|\widetilde\phi_2\|_0,$$
        which yields \eqref{eq:phi2-apriori}. The combination of \eqref{eq:phi1-apriori} and \eqref{eq:phi2-apriori} gives \eqref{eq:phi-apriori}. This concludes the proof.
    \end{proof}
\end{lemma}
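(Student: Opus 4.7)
The plan is to exploit the block lower-triangular structure of the source operator that the authors emphasized right before the lemma. Because the first row of the block system depends only on $\widetilde\phi_1$, testing \eqref{eq:source-problem} with functions of the form $(v_1, 0)$ decouples the equations and yields a standard coercive elliptic problem for $\widetilde\phi_1$ alone, namely
\begin{equation*}
    (D_1 \nabla \widetilde\phi_1, \nabla v_1)_0 + (\Sigma_{a1} + \Sigma_{1\to 2})(\widetilde\phi_1, v_1)_0 + \alpha_1\langle \widetilde\phi_1, v_1\rangle_{\partial\Omega} = \nu_1 \Sigma_{f1}(f_1, v_1)_0 + \nu_2 \Sigma_{f2}(f_2, v_1)_0.
\end{equation*}
Under strict positivity of $D_1$, $\Sigma_{a1}$, $\Sigma_{1\to 2}$ (and nonnegativity of $\alpha_1$), the left-hand side is a continuous coercive sesquilinear form on $H^1(\Omega,\C)$ with coercivity constant at least $\widetilde\alpha_1 = \min\{D_1,\, \Sigma_{a1}+\Sigma_{1\to 2}\}$, so Lax–Milgram (or more precisely its complex counterpart for sesquilinear forms) delivers existence and uniqueness of $\widetilde\phi_1$.

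For the a priori bound \eqref{eq:phi1-apriori}, I would test with $v_1 = \widetilde\phi_1$, discard the nonnegative boundary contribution, bound the left side below by $\widetilde\alpha_1 \|\widetilde\phi_1\|_1^2$, apply Cauchy–Schwarz on the right to obtain $\widetilde\alpha_1 \|\widetilde\phi_1\|_1^2 \le (\nu_1 \Sigma_{f1}\|f_1\|_0 + \nu_2\Sigma_{f2}\|f_2\|_0)\|\widetilde\phi_1\|_0$, and divide by $\|\widetilde\phi_1\|_1$ (using $\|\widetilde\phi_1\|_0 \le \|\widetilde\phi_1\|_1$). With $\widetilde\phi_1$ in hand, I would then test \eqref{eq:source-problem} with $(0, v_2)$ to obtain the second decoupled problem
\begin{equation*}
   (D_2 \nabla \widetilde\phi_2, \nabla v_2)_0 + \Sigma_{a2}(\widetilde\phi_2, v_2)_0 + \alpha_2\langle \widetilde\phi_2, v_2\rangle_{\partial\Omega} = \Sigma_{1\to 2}(\widetilde\phi_1, v_2)_0,
\end{equation*}
which by the same Lax–Milgram argument admits a unique $\widetilde\phi_2 \in H^1(\Omega,\C)$, with coercivity constant $\widetilde\alpha_2 = \min\{D_2, \Sigma_{a2}\}$. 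Testing against $v_2 = \widetilde\phi_2$ gives $\widetilde\alpha_2 \|\widetilde\phi_2\|_1 \le \Sigma_{1\to 2}\|\widetilde\phi_1\|_0$, which is the first half of \eqref{eq:phi2-apriori}; chaining with \eqref{eq:phi1-apriori} and bounding $\|\widetilde\phi_1\|_0 \le \|\widetilde\phi_1\|_1$ gives the second half. Adding the two a priori estimates and collecting common factors produces \eqref{eq:phi-apriori} with $C_1 = 1 + \Sigma_{1\to 2}/\widetilde\alpha_1$.

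I do not anticipate any genuine obstacle here: the whole argument is two applications of Lax–Milgram chained by the triangular structure. The only minor care points are (i) verifying that the Robin boundary term is nonnegative so that it can be harmlessly dropped in the coercivity estimate (which is compatible with the physical assumption $\alpha_1, \alpha_2 \ge 0$ implicit in the model), and (ii) working over $\C$, where one must use the sesquilinear version of Lax–Milgram; neither of these changes the substance of the argument.
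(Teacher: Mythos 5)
Your proposal is correct and follows essentially the same route as the paper: test with $(v_1,0)$ to decouple the first equation, apply Lax--Milgram and the coercivity constant $\widetilde\alpha_1$ to get \eqref{eq:phi1-apriori}, then test with $(0,v_2)$ and repeat with $\widetilde\alpha_2$ to get \eqref{eq:phi2-apriori}, and chain the two estimates for \eqref{eq:phi-apriori}. If anything, you are slightly more careful than the paper's own write-up, which omits the Robin boundary term from the displayed decoupled problems and drops the $\nu_i\Sigma_{fi}$ factors in the intermediate a priori bound; your explicit remark that $\alpha_1,\alpha_2\ge 0$ is needed to discard the boundary contribution is a point the paper leaves implicit.
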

We note that the invertibility of the system poses no significant constraints on the parameters. This is not the case for a coupled ellipticity estimate to hold. We state this in the following lemma. 

\begin{lemma}\label{lmm:coercivity}
    If 
    \begin{equation}\label{eq:data-bounded}
        \Sigma_{1\to 2} \in \left(2\Sigma_{a1} - \sqrt{\Sigma_{a2}^2 + \Sigma_{a1}\Sigma_{a2}}, 2\Sigma_{a1} - \sqrt{\Sigma_{a2}^2 + \Sigma_{a1}\Sigma_{a2}}\right),
    \end{equation}
    then, bilinear form $a(\cdot,\cdot)$ involved  in problem \eqref{eq:source-problem} is elliptic.
    
    \begin{proof}
Let $(v_1,v_2)\in V$. From the definition of $a(\cdot,\cdot)$ we have 
\begin{align*}
    a((v_1,v_2),(v_1,v_2)) 
    &= \int_\Omega D_1 |\nabla v_1|^2 \,d\O
    + \int_{\O}(\Sigma_{a1} + \Sigma_{1 \rightarrow 2})\, v_1^2 \, d\Omega \\
    &\quad + \int_\Omega D_2 |\nabla v_2|^2\, d\O 
    + \int_{\O}\Sigma_{a2} v_2^2 \, d\Omega
    - \int_\Omega \Sigma_{1 \rightarrow 2}\, v_1 \overline{v}_2 \, d\Omega\\
    &\quad+\int_{\partial \O}  \alpha_1v_1^2+\,dS+\int_{\partial \O} \alpha_2v_2^2\,dS .
\end{align*}
Since $\displaystyle\int_{\partial \O}  \alpha_1v_1^2+\,dS+\displaystyle\int_{\partial \O} \alpha_2v_2^2\,dS>0$ and using the generalized Young inequality
$\displaystyle ab\leq\frac{a^2}{2\varepsilon}+\frac{\varepsilon b^2}{2}$ for all $\varepsilon>0$, we have
\begin{multline*}
    a((v_1,v_2),(v_1,v_2)) 
    \geq D_1 \|\nabla v_1\|_{0,\O}^2 
    + (\Sigma_{a1} + \Sigma_{1 \rightarrow 2})\|v_1\|_{0,\O}^2  \\
    \qquad + D_2 \|\nabla v_2\|_{0,\O}^2 
    + \Sigma_{a2} \|v_2\|_{0,\O}^2 
    - \Sigma_{1 \rightarrow 2}(v_1, v_2)_0\\
    \geq D_1 \|\nabla v_1\|_{0,\O}^2 
    + (\Sigma_{a1} + \Sigma_{1 \rightarrow 2})\|v_1\|_{0,\O}^2  \\
    \qquad + D_2 \|\nabla v_2\|_{0,\O}^2 
    + \Sigma_{a2} \|v_2\|_{0,\O}^2 -\Sigma_{1 \rightarrow 2}\|v_1\|_{0,\O}\|v_2\|_{0,\O}\\
    \geq D_1 \|\nabla v_1\|_{0,\O}^2 
    + (\Sigma_{a1} + \Sigma_{1 \rightarrow 2})\|v_1\|_{0,\O}^2  \\
    \qquad + D_2 \|\nabla v_2\|_{0,\O}^2 
    + \Sigma_{a2} \|v_2\|_{0,\O}^2+\Sigma_{1\rightarrow 2}\left(-\frac{\|v_1\|_{0,\O}^2}{2\varepsilon}-\frac{\varepsilon\|v_2\|_{0,\O}^2}{2} \right)\\
    =D_1\|\nabla v_1\|_{0,\O}^2+\underbrace{\left(\Sigma_{a1}+\Sigma_{1\rightarrow 2}-\frac{\Sigma_{1\rightarrow 2}}{2\varepsilon} \right)}_{C_1}\|v_1\|_{0,\O}^2\\
    \qquad+D_2\|\nabla v_2\|_{0,\O}^2+\underbrace{\left(\Sigma_{a2}-\frac{\Sigma_{1\rightarrow 2}\varepsilon}{2} \right)}_{C_2}\|v_2\|_{0,\O}^2.
\end{multline*}
Now, to ensure the positiveness of $C_1$, parameter $\varepsilon$ must be chosen as follows
\begin{equation}
\label{eq:epsilon1}
\displaystyle\frac{\Sigma_{1\rightarrow 2}}{2(\Sigma_{a1}+\Sigma_{1\rightarrow 2})}<\varepsilon,
\end{equation}
whereas for $C_2$ the parameters must satisfy
\begin{equation}
\label{eq:epsilon2}
\displaystyle \frac{2\Sigma_{a2}}{\Sigma_{1\rightarrow 2}}>\varepsilon.
\end{equation}
Hence, we require $\varepsilon$ to satisfy \eqref{eq:epsilon1} and \eqref{eq:epsilon2}. We note that this imposes a condition on the data as $\texttt{lhs} < \varepsilon < \texttt{rhs}$ implies $\texttt{lhs} < \texttt{rhs}$. After some elementary computations we obtain
    $$ \Sigma_{1\to 2}^2 - 4\Sigma_{a2}\Sigma_{1\to 2} - 4\Sigma_{a1}\Sigma_{a2} < 0,$$
which yields \eqref{eq:data-bounded}. Finally, we have 
\begin{equation*}
a((v_1,v_2),(v_1,v_2)) \geq \alpha \|(v_1,v_2)\|_V^2,
\end{equation*}
where the ellipticity constant $\alpha$ is defined by $\alpha\coloneqq\min\{D_1,D_2,C_1,C_2\}$. This concludes the proof.
\end{proof}

\end{lemma}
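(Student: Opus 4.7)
The natural strategy is to test the sesquilinear form $a(\cdot,\cdot)$ along the diagonal, that is, evaluate $a((v_1,v_2),(v_1,v_2))$ for an arbitrary $(v_1,v_2)\in V$, and then isolate the only indefinite contribution. Expanding the definition, all terms are manifestly non-negative except for the cross-group coupling $-\Sigma_{1\to 2}\int_\Omega v_1\overline{v_2}\,d\Omega$. The diffusive parts already give $D_1\|\nabla v_1\|_{0,\Omega}^2 + D_2\|\nabla v_2\|_{0,\Omega}^2$, while the mass and boundary contributions yield $(\Sigma_{a1}+\Sigma_{1\to 2})\|v_1\|_{0,\Omega}^2 + \Sigma_{a2}\|v_2\|_{0,\Omega}^2$ together with the non-negative boundary integrals involving $\alpha_1,\alpha_2\geq 0$, which can simply be discarded.

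The key manipulation is to absorb the cross term into the two $L^2$ reservoirs via Young's inequality with an adjustable parameter. After first bounding the complex quantity by Cauchy--Schwarz as $|\int_\Omega v_1\overline{v_2}\,d\Omega| \leq \|v_1\|_{0,\Omega}\|v_2\|_{0,\Omega}$, the estimate $ab \leq \frac{a^2}{2\varepsilon} + \frac{\varepsilon b^2}{2}$ with a free $\varepsilon>0$ produces effective mass coefficients of $\Sigma_{a1}+\Sigma_{1\to 2} - \frac{\Sigma_{1\to 2}}{2\varepsilon}$ for the $\|v_1\|_{0,\Omega}^2$ term and $\Sigma_{a2} - \frac{\varepsilon\Sigma_{1\to 2}}{2}$ for the $\|v_2\|_{0,\Omega}^2$ term. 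Requiring both quantities to be strictly positive translates into two linear inequalities, namely $\varepsilon > \frac{\Sigma_{1\to 2}}{2(\Sigma_{a1}+\Sigma_{1\to 2})}$ and $\varepsilon < \frac{2\Sigma_{a2}}{\Sigma_{1\to 2}}$.

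The remaining task is to ensure that this window for $\varepsilon$ is non-empty, which is the only genuine obstruction in the argument. Combining the two bounds gives $\frac{\Sigma_{1\to 2}}{2(\Sigma_{a1}+\Sigma_{1\to 2})} < \frac{2\Sigma_{a2}}{\Sigma_{1\to 2}}$; clearing denominators yields the quadratic inequality $\Sigma_{1\to 2}^2 - 4\Sigma_{a2}\Sigma_{1\to 2} - 4\Sigma_{a1}\Sigma_{a2} < 0$ in the unknown $\Sigma_{1\to 2}$. Solving this quadratic and intersecting with the positivity constraint recovers precisely the admissible interval assumed in the hypothesis. Once a feasible $\varepsilon$ is fixed, the two effective mass coefficients become positive constants $C_1$ and $C_2$, and setting $\alpha \coloneqq \min\{D_1, D_2, C_1, C_2\}$ one concludes $a((v_1,v_2),(v_1,v_2)) \geq \alpha\|(v_1,v_2)\|_V^2$.

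The main obstacle is really bookkeeping: tracking the auxiliary parameter $\varepsilon$ through two inequalities and extracting a sharp, clean condition expressed only through the physical parameters. A secondary subtlety, worth being explicit about, is that since the form is sesquilinear and not symmetric, the cross term is complex-valued in general, so one must pass to a modulus bound via Cauchy--Schwarz before invoking Young's inequality rather than treating the term as a real product; this step is essential in the complex setting used later for the eigenvalue analysis.
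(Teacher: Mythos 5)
Your proposal is correct and follows essentially the same route as the paper: diagonal testing, discarding the non-negative boundary terms, Cauchy--Schwarz plus Young's inequality with a free parameter $\varepsilon$, and reducing the non-emptiness of the $\varepsilon$-window to the quadratic condition $\Sigma_{1\to 2}^2 - 4\Sigma_{a2}\Sigma_{1\to 2} - 4\Sigma_{a1}\Sigma_{a2} < 0$. Your explicit remark that the cross term is complex-valued and must first be bounded in modulus before applying Young's inequality is in fact slightly more careful than the paper's own write-up, which treats the diagonal terms as if they were real squares.
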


We highlight that possibly sharper estimates are computable by leveraging the Poincaré inequality and thus considering the diffusion coefficients as well, but we have not seen numerical evidence suggesting that the problem is sensitive to large variations of $\Sigma_{1\to 2}$. We believe this happens because all of the theory developed in this work does not require ellipticity to hold.

\section{The eigenvalue problem} \label{sec:eigen_problem}

Let us recall the eigenvalue problem of our interest: Find $\lambda$ in $\C$ and $(0,0)\neq(\phi_1,\phi_2)\in V$ such that
\begin{equation} \label{eq:eigen_problem}
    a((\phi_1,\phi_2),(v_1,v_2)) = \lambda b((\phi_1,\phi_2),(v_1,v_2))  \quad \forall (v_1,v_2) \in V,
\end{equation}
where $a$ and $b$ are the bilinear forms defined in Section~\ref{sec:source_problem}. As is customary in eigenvalue analysis, we need to introduce a solution operator. Let $T:V\to V$ be this operator, defined by the mapping 
\begin{equation*}
(f,g)\mapsto T(f,g)=(\widetilde{\phi}_1,\widetilde{\phi}_2),
\end{equation*}
where $(\widetilde{\phi}_1,\widetilde{\phi}_2)\in V$ is the solution of the source problem \eqref{eq:source-problem}. We consider the restriction of the operator to $V$ instead of using $Q$ to avoid unnecessary technicalities further ahead in the analysis. Lemma \ref{lemma:source-invertibility} implies that $T$ is well defined, and we additionally have the following regularity result (see \cite{MR775683,MR3340705}).
\begin{lemma}
\label{lmm:additional_regularity}
There exists $r_{\O}>1/2$ such that:
\begin{enumerate}
\item For all $f\in H^1(\O,\mathbb{C})$ and for all $\widehat{s}\in [1/2,r_{\O})$, the solution of \eqref{eq:source-problem} is such that 
$\widetilde{\phi}_1,\widetilde{\phi}_2\in H^{1+s}(\Omega,\mathbb{C})$ with $s\coloneqq\min\{\widehat{s},1\}$. Also, there exists a positive constant $C$ such that the following estimate holds 
\begin{equation*}
\label{eq:additional_reg_source_primal}
\|\widetilde{\phi}_1\|_{1+s,\O}+\|\widetilde{\phi}_2\|_{1+s,\O}\leq C\|(f,g)\|_V.
\end{equation*}
\item If $(w_1,w_2)\in V$ are eigenfunctions of \eqref{eq:eigen_problem} with eigenvalue $\lambda$, for all $r\in[1/2,r_{\O})$ there holds $w_1,w_2\in H^{1+r}(\O,\mathbb{C})$ and there exists a positive constant $\widehat{C}$ (depending on the eigenvalue) such that 
\begin{equation*}
\|w_1\|_{1+r,\O}+\|w_2\|_{1+r,\O}\leq \widehat{C}\|(w_1,w_2)\|_{V}.
\end{equation*}
\end{enumerate}
\end{lemma}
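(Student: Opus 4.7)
The idea is to exploit the triangular structure of the coupled system combined with classical elliptic regularity on Lipschitz domains with Robin boundary conditions, as provided by \cite{MR775683,MR3340705}. The equation for $\widetilde\phi_1$ is uncoupled from $\widetilde\phi_2$, while $\widetilde\phi_2$ only depends on $\widetilde\phi_1$ through a zeroth order source term, so part (1) reduces to applying a scalar regularity shift twice on the same domain.

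For part (1), I would first observe that $\widetilde\phi_1$ satisfies the uncoupled scalar Robin problem
\begin{equation*}
-\nabla\cdot(D_1\nabla\widetilde\phi_1) + (\Sigma_{a1}+\Sigma_{1\to 2})\widetilde\phi_1 = \nu_1\Sigma_{f1}f_1+\nu_2\Sigma_{f2}f_2 \quad\text{in }\O,
\end{equation*}
with $D_1\partial_n\widetilde\phi_1 + \alpha_1\widetilde\phi_1 = 0$ on $\partial\O$. Since the right-hand side is in $L^2(\O,\C)$ and the operator is uniformly elliptic with strictly positive zeroth order coefficient, the cited results deliver an $r_\O>1/2$, depending only on the Lipschitz geometry of $\O$, such that for any $\widehat{s}\in[1/2,r_\O)$ the solution satisfies $\widetilde\phi_1\in H^{1+s}(\O,\C)$ with $s=\min\{\widehat{s},1\}$ and
\begin{equation*}
\|\widetilde\phi_1\|_{1+s,\O} \leq C\bigl(\|f_1\|_{0,\O}+\|f_2\|_{0,\O}\bigr).
\end{equation*}
I would then plug this back as data in the second equation
\begin{equation*}
-\nabla\cdot(D_2\nabla\widetilde\phi_2) + \Sigma_{a2}\widetilde\phi_2 = \Sigma_{1\to 2}\widetilde\phi_1 \quad\text{in }\O,
\end{equation*}
with its corresponding Robin boundary condition. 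Its right-hand side lies in $L^2(\O,\C)$, so the same regularity shift yields $\widetilde\phi_2\in H^{1+s}(\O,\C)$ together with $\|\widetilde\phi_2\|_{1+s,\O}\leq C\,\Sigma_{1\to 2}\|\widetilde\phi_1\|_{0,\O}$. Adding the two bounds and absorbing the $H^1$ control of $(\widetilde\phi_1,\widetilde\phi_2)$ provided by \eqref{eq:phi-apriori} produces the announced estimate.

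For part (2), any eigenpair $(\lambda,(w_1,w_2))$ with $w_1,w_2\in H^1(\O,\C)$ satisfies exactly the source problem \eqref{eq:source-problem} with data $(f_1,f_2)=\lambda(w_1,w_2)\in V\subset Q$, so part (1) applies verbatim and yields $w_1,w_2\in H^{1+r}(\O,\C)$ for every $r\in[1/2,r_\O)$, with a constant $\widehat{C}$ that absorbs $|\lambda|$ together with the continuity constants. The step I expect to be the main obstacle is the calibration of the Sobolev exponent $r_\O$: on a general Lipschitz domain full $H^2$ regularity is not available, and the precise range $[1/2,r_\O)$ is exactly what the Jerison--Kenig and Dauge-type results in \cite{MR775683,MR3340705} supply for second order operators with Robin data. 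I would invoke them as a black box, stressing that the same $r_\O$ can be used for both scalar equations because they share the same operator structure (uniformly elliptic, positive zeroth order term, Robin boundary condition) on the same domain $\O$.
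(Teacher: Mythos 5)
Your argument is correct and coincides with what the paper intends: the paper states this lemma without a written proof, merely citing \cite{MR775683,MR3340705}, and the intended justification is exactly your decoupling of the block-triangular system into two scalar Robin problems, each handled by the cited regularity shift on Lipschitz domains, with part (2) following by feeding the eigenpair back into the source problem as data $\lambda(w_1,w_2)\in V$. Your write-up supplies the details the paper leaves implicit and contains no gaps.
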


In virtue of Lemmas~\ref{lemma:source-invertibility} and \ref{lmm:additional_regularity}, we have the following important result.

\begin{corollary}\label{lmm:T-props}
    The solution operator $T$ is well-defined and compact.
    \begin{proof}
        The operator is well-defined in virtue of Lemma~\ref{lemma:source-invertibility}. The compactness is a consequence of the additional regularity from Lemma~\ref{lmm:additional_regularity} and the compact Sobolev embedding from $H^{1+s}(\O, \C)$ to $H^1(\O,\C)$.
    \end{proof}
\end{corollary}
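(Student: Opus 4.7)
The plan is to derive both claims as straightforward consequences of results already established earlier in the paper, with the only real work being the routing of hypotheses through the correct lemmas.

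For well-definedness, I would start by noting that $V \hookrightarrow Q$ continuously, so any $(f,g) \in V$ in particular sits in $Q$, which is the natural data space for the source problem \eqref{eq:source-problem}. Applying Lemma~\ref{lemma:source-invertibility} (with data $(f_1, f_2) := (f,g)$) gives existence and uniqueness of $(\widetilde\phi_1, \widetilde\phi_2) \in V$, together with the a priori bound \eqref{eq:phi-apriori}. Since $V \hookrightarrow Q$ continuously, this bound translates into
\begin{equation*}
\|T(f,g)\|_V \le C \|(f,g)\|_V,
\end{equation*}
which simultaneously proves that $T$ is well-defined as a map $V \to V$ and that it is bounded (hence continuous) linear.

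For compactness, I would use the standard strategy of combining extra regularity with a Rellich--Kondrachov compact embedding. Take an arbitrary bounded sequence $\{(f_n, g_n)\} \subset V$. By the boundedness of $T$ we already have that $\{T(f_n,g_n)\}$ is bounded in $V$. The key upgrade is provided by the first item of Lemma~\ref{lmm:additional_regularity}: for some $s \in (1/2, r_\O) \cap (0,1]$, each $T(f_n, g_n) = (\widetilde\phi_{1,n}, \widetilde\phi_{2,n})$ actually lies in $H^{1+s}(\O,\C) \times H^{1+s}(\O,\C)$, with
\begin{equation*}
\|\widetilde\phi_{1,n}\|_{1+s,\O} + \|\widetilde\phi_{2,n}\|_{1+s,\O} \le C\|(f_n,g_n)\|_V.
\end{equation*}
So $\{T(f_n, g_n)\}$ is bounded in the stronger space $H^{1+s}(\O,\C) \times H^{1+s}(\O,\C)$. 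Since $\O$ is a bounded Lipschitz domain and $s>0$, the embedding $H^{1+s}(\O,\C) \hookrightarrow H^1(\O,\C)$ is compact, and therefore the product embedding into $V$ is compact as well. Extracting a convergent subsequence in $V$ yields the compactness of $T$.

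The only subtlety I would flag is checking that the regularity lemma is genuinely applicable to our situation: Lemma~\ref{lmm:additional_regularity} is stated for data in $H^1$, so since $V = H^1(\O,\C) \times H^1(\O,\C)$, this is exactly the hypothesis we need and no separate density or lifting argument is required. Neither step relies on ellipticity of $a(\cdot,\cdot)$, which is reassuring given the remark right after Lemma~\ref{lmm:coercivity} that the theory should not require it. So the proof reduces to a one-line citation for well-definedness and a textbook regularity-plus-Rellich argument for compactness; I do not foresee a genuine obstacle.
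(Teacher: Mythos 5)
Your proposal is correct and follows essentially the same route as the paper: well-definedness from Lemma~\ref{lemma:source-invertibility}, and compactness from the extra regularity in Lemma~\ref{lmm:additional_regularity} combined with the compact embedding $H^{1+s}(\O,\C)\hookrightarrow H^1(\O,\C)$. You merely spell out the bounded-sequence/Rellich argument that the paper leaves implicit.
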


\subsection{The adjoint problem}
Since the eigenvalue problem is not self-adjoint, the analysis requires to incorporate the adjoint eigenvalue problem. This problem is stated as follows: Find $\lambda^*$ in $\C$ and $(0,0)\neq (\phi_1^*,\phi_2^*)\in V$, such that 
\begin{equation}\label{eq:eigen_problem_dual}
    a((v_1,v_2),(\phi_1^*,\phi_2^*)) = \lambda^* b( (v_1,v_2),(\phi_1^*,\phi_2^*))  \quad \forall (v_1,v_2) \in V.
\end{equation}
For the analysis, we introduce the adjoint of $T$, which we denote by $T^*$, and is defined by 
\begin{equation*}
T^*:V\rightarrow V,\quad (f,g)\mapsto T^*(f,g)=(\widetilde{\phi}_1^*,\widetilde{\phi}_2^*),
\end{equation*}
where the pair $(\widetilde{\phi}_1^*,\widetilde{\phi}_2^*)$ is the unique solution of the adjoint source problem given by 
\begin{equation}\label{eq:source_problem_dual}
    a((v_1,v_2),(\widetilde{\phi}_1^*,\widetilde{\phi}_2^*)) = b( (v_1,v_2),(f,g))  \quad \forall (v_1,v_2) \in V.
\end{equation}
This problem has the following triangular structure:
$$ \begin{bmatrix} \ten A_1 & -\Sigma_{1\to 2}\ten I  \\ \ten 0 & \ten A_2 \end{bmatrix} \begin{bmatrix} \phi_1^* \\ \phi_2^* \end{bmatrix} = \begin{bmatrix}  \nu_1\Sigma_{f1} f_1 \\  \nu_2\Sigma_{f2} f_1 \end{bmatrix}. $$
Proceeding analogously to the primal source problem, we can obtain the following well-posedness result for the adjoint problem:
\begin{lemma}\label{lmm:adjoint_invertibility}
    There exists a unique solution $(\phi_1^*, \phi_2^*)$ in $V$ of problem \eqref{eq:source_problem_dual}. This solution satisfies
    \begin{equation*}\label{eq:phi2-dual-apriori}
        \alpha^*_2 \| \phi_2^* \|_1 \leq \nu_2 \Sigma_{f2} \| f_1\|_{0,\O},
    \end{equation*}
    where $\alpha^*_2 \coloneqq \min\{D_2, \Sigma_{a2}\}$ and
    \begin{equation*}\label{eq:phi1-dual-apriori}
        \alpha_1^* \| \phi_1^* \|_{1,\O} \leq \nu_1 \Sigma_{f1}\|f_1\|_{0,\O} + \Sigma_{1\to 2}\|\phi_2^*\|_{0,\O} \leq \left(\nu_1\Sigma_{f1} + \frac{\Sigma_{1\to 2}\nu_2\Sigma_{f2} }{\alpha_2} \right) \| f_1 \|_{0,\O},
    \end{equation*}
    with $\alpha_1^* \coloneqq \min\{D_1, \Sigma_{a1}+\Sigma_{1\to 2}\}$. In particular the following \emph{a-priori} estimate holds: 
    \begin{equation*}\label{eq:phi-dual-apriori}
        \alpha_1^*\| \phi_1^* \|_{1,\O} + \alpha_2^* \| \phi_2^* \|_{1,\O} \leq C_1^* \| f_1 \|_{0,\O},
    \end{equation*}
    where $C_1^* \coloneqq \left( \nu_1\Sigma_{f1} + \left(1 + \displaystyle\frac{\Sigma_{1\to 2}}{\alpha_2}\right)\nu_2\Sigma_{f2} \right)$.
\end{lemma}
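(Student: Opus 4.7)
The plan is to mirror the proof of Lemma~\ref{lemma:source-invertibility}, exploiting the fact that the adjoint problem still has a (block) triangular structure, only with the coupling term moved to the other off--diagonal block. In the primal case one first solves for $\widetilde\phi_1$ (decoupled equation) and then uses it as data for $\widetilde\phi_2$; in the adjoint case the roles are reversed, so the order of the two sub--steps must be swapped.

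First I would test \eqref{eq:source_problem_dual} against pairs of the form $(v_1,0)$ and $(0,v_2)$ to split the problem into two scalar elliptic equations consistent with the triangular block display. The equation for $\phi_2^*$ reads
\begin{equation*}
(D_2\nabla \phi_2^*,\nabla v_2)_0 + \Sigma_{a2}(\phi_2^*,v_2)_0 = \nu_2\Sigma_{f2}(f_1,v_2)_0 \qquad \forall v_2\in H^1(\Omega,\mathbb{C}),
\end{equation*}
which is coercive and continuous under the standing positivity assumptions on $D_2$ and $\Sigma_{a2}$, so Lax--Milgram provides a unique solution, and choosing the test function as $\phi_2^*$ itself yields
\begin{equation*}
\alpha_2^*\|\phi_2^*\|_1^2 \leq \nu_2\Sigma_{f2}\|f_1\|_0\|\phi_2^*\|_0,
\end{equation*}
with $\alpha_2^* = \min\{D_2,\Sigma_{a2}\}$. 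Dividing by $\|\phi_2^*\|_1$ gives the first bound.

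Once $\phi_2^*$ is determined, the equation for $\phi_1^*$ becomes
\begin{equation*}
(D_1\nabla \phi_1^*,\nabla v_1)_0 + (\Sigma_{a1}+\Sigma_{1\to 2})(\phi_1^*,v_1)_0 = \nu_1\Sigma_{f1}(f_1,v_1)_0 + \Sigma_{1\to 2}(\phi_2^*,v_1)_0,
\end{equation*}
for all $v_1\in H^1(\Omega,\mathbb{C})$, which is again a standard coercive problem; a second application of Lax--Milgram produces a unique $\phi_1^*$ and, after testing against $\phi_1^*$ and using the Cauchy--Schwarz inequality on the right--hand side, the estimate
\begin{equation*}
\alpha_1^*\|\phi_1^*\|_1 \leq \nu_1\Sigma_{f1}\|f_1\|_0 + \Sigma_{1\to 2}\|\phi_2^*\|_0.
\end{equation*}
Inserting the bound from the first step here gives the second asserted inequality, and summing the two bounds produces the combined \emph{a priori} estimate with $C_1^*$ as stated.

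There is essentially no analytical obstacle: the triangular structure of the adjoint makes both sub--problems classical coercive scalar Helmholtz--type equations, and the only care needed is bookkeeping to produce exactly the constants $\alpha_1^*$, $\alpha_2^*$ and $C_1^*$ declared in the statement. The mildly subtle point is simply to notice that, contrary to the primal case, the decoupled equation is now the one for the \emph{thermal} component $\phi_2^*$ (because the transposed coupling matrix is upper triangular), so the chain of estimates must be carried out in reverse order compared with Lemma~\ref{lemma:source-invertibility}.
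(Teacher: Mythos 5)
Your proposal is correct and coincides with what the paper intends: the paper omits the proof of this lemma entirely, saying only that one proceeds analogously to Lemma~\ref{lemma:source-invertibility}, and your write-up carries out exactly that analogy, correctly observing that the transposed coupling makes the system upper triangular so that $\phi_2^*$ decouples and the two Lax--Milgram steps are taken in the reverse order. The only cosmetic omission is the Robin boundary terms $\int_{\partial\O}\alpha_i\,|\phi_i^*|^2\,dS$ in the tested identities, but since these are nonnegative they can be discarded from the lower bound without affecting the constants $\alpha_1^*$, $\alpha_2^*$, $C_1^*$, just as in the paper's proof of the primal case.
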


Lemma~\ref{lmm:adjoint_invertibility} implies that $T^*$ is well defined. In addition, we also have from Lemma~\ref{lmm:additional_regularity} that there exists $s^*\geq 1/2$ such that
\begin{equation}
\label{eq:reg_dual_source}
\|\widetilde{\phi}_1^*\|_{1+s^*,\O}+\|\widetilde{\phi}_2^*\|_{1+s^*,\O}\leq C\|(f,g)\|_V,
\end{equation}
and for the eigenfunctions of \eqref{eq:eigen_problem_dual}, the following estimate holds 
\begin{equation*}
\|\phi_1^*\|_{1+r^*,\O}+\|\phi_2^*\|_{1+r^*,\O}\leq \widehat{C}\|(\phi_1^*,\phi_2^*)\|_{V},
\end{equation*}
where $C>0$ and $r^*\geq 1/2$. We thus have the adjoint analog of Lemma~\ref{lmm:T-props}.
\begin{corollary}\label{lmm:T-adjoint-props}
    The adjoint solution operator $T^*$ is well-defined and compact.
\end{corollary}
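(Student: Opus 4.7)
The proof will closely mirror that of Corollary~\ref{lmm:T-props}, just swapping the primal source problem for the adjoint source problem \eqref{eq:source_problem_dual}. The plan is therefore to collect the two ingredients that the excerpt has already prepared: (i) unique solvability of the adjoint source problem, and (ii) additional Sobolev regularity of its solution, and then to feed them into a standard compact embedding argument.

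First, I would establish that $T^*$ is well-defined. Given any $(f,g)\in V\subset Q$, Lemma~\ref{lmm:adjoint_invertibility} provides a unique pair $(\widetilde\phi_1^*,\widetilde\phi_2^*)\in V$ solving \eqref{eq:source_problem_dual}, together with the \emph{a-priori} bound
\begin{equation*}
    \alpha_1^*\|\widetilde\phi_1^*\|_{1,\O}+\alpha_2^*\|\widetilde\phi_2^*\|_{1,\O}\leq C_1^*\|f\|_{0,\O}\leq C_1^*\|(f,g)\|_V.
\end{equation*}
This shows that the mapping $(f,g)\mapsto T^*(f,g)=(\widetilde\phi_1^*,\widetilde\phi_2^*)$ is a well-defined bounded linear operator from $V$ into itself.

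Next, I would upgrade this to compactness. By \eqref{eq:reg_dual_source} there exists $s^*\geq 1/2$ and a constant $C>0$ such that
\begin{equation*}
    \|\widetilde\phi_1^*\|_{1+s^*,\O}+\|\widetilde\phi_2^*\|_{1+s^*,\O}\leq C\|(f,g)\|_V,
\end{equation*}
so $T^*$ factors as the bounded map $V\to H^{1+s^*}(\O,\C)\times H^{1+s^*}(\O,\C)$ followed by the inclusion into $V=H^1(\O,\C)\times H^1(\O,\C)$. The latter inclusion is compact because of the Rellich--Kondrachov theorem applied componentwise. Hence $T^*$ is the composition of a bounded operator with a compact one, and therefore compact.

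I do not anticipate any real obstacle here, since Lemma~\ref{lmm:adjoint_invertibility} and the regularity bound \eqref{eq:reg_dual_source} have already done all the heavy lifting; the only subtlety is that the source data appearing on the right-hand side of the adjoint problem only involves $f$ (not $g$), but this does not affect the argument because we still bound $\|f\|_{0,\O}\leq \|(f,g)\|_V$, so the resulting operator inequality is genuinely from $V$ into $V$.
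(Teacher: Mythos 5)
Your argument is correct and coincides with the paper's intended proof: the paper leaves this corollary without an explicit argument precisely because it is the verbatim adjoint analogue of Corollary~\ref{lmm:T-props}, i.e.\ well-definedness from Lemma~\ref{lmm:adjoint_invertibility} and compactness from the regularity estimate \eqref{eq:reg_dual_source} combined with the compact embedding of $H^{1+s^*}(\O,\C)$ into $H^1(\O,\C)$. Your additional remark that the adjoint right-hand side involves only $f$ is accurate and handled correctly by bounding $\|f\|_{0,\O}\leq\|(f,g)\|_V$.
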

\begin{lemma}
    All eigenvalues of problem \eqref{eq:neutron-transport-weak-abstract} are real numbers.
    \end{lemma}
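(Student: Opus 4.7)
The plan is to exploit the triangular block structure of the problem that was already highlighted after \eqref{eq:source-problem}, namely that the thermal equation does not see the fission source. This reduces the eigenvalue problem to a symmetric generalized eigenvalue problem purely in $\phi_1$, from which the reality of $\lambda$ follows by a Rayleigh-quotient style computation.

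First I would test \eqref{eq:eigen_problem} with $(0, v_2)$. Since $b$ couples only to the first test function, its right-hand side vanishes and one is left with the thermal-group identity
\begin{equation*}
\widetilde a_2(\phi_2, v_2) \;=\; \Sigma_{1\to 2}(\phi_1, v_2)_{0,\O} \qquad \forall v_2 \in H^1(\O, \C),
\end{equation*}
where $\widetilde a_2(w, v) \coloneqq \int_\O D_2 \nabla w \cdot \nabla \overline{v} + \Sigma_{a2} w\overline{v} \, d\O + \int_{\partial\O}\alpha_2 w\overline{v} \, dS$ is Hermitian and coercive. Lax--Milgram therefore yields a bounded solution operator $T_2 : L^2(\O, \C) \to H^1(\O, \C)$ defined by $\widetilde a_2(T_2 f, v) = (f, v)_{0,\O}$, so that $\phi_2 = \Sigma_{1\to 2}\, T_2 \phi_1$. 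This forces $\phi_1 \neq 0$, since otherwise $\phi_2 = 0$ and the eigenpair would be trivial. A direct calculation based on the Hermicity of $\widetilde a_2$ shows that $T_2$ is self-adjoint with respect to the complex $L^2$ inner product and satisfies $(T_2 f, f)_{0,\O} = \widetilde a_2(T_2 f, T_2 f) \geq 0$, with strict positivity whenever $f \neq 0$.

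Next I would test \eqref{eq:eigen_problem} with $(\phi_1, 0)$ and substitute $\phi_2 = \Sigma_{1\to 2}\,T_2 \phi_1$ into the fission source to obtain
\begin{equation*}
\widetilde a_1(\phi_1, \phi_1) \;=\; \lambda\, \bigl[\, \nu_1\Sigma_{f1}\|\phi_1\|_{0,\O}^2 + \nu_2\Sigma_{f2}\Sigma_{1\to 2}\, (T_2 \phi_1, \phi_1)_{0,\O}\,\bigr],
\end{equation*}
where $\widetilde a_1$ denotes the analogous Hermitian coercive form on the fast component. The left-hand side is a strictly positive real number by Hermicity and coercivity of $\widetilde a_1$. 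The bracketed factor on the right is a sum of nonnegative real numbers, strictly positive because $\phi_1 \neq 0$, the cross sections are positive, and $(T_2\phi_1,\phi_1)_{0,\O}>0$ by the property of $T_2$ just established. Dividing, $\lambda$ is the quotient of two positive reals, so $\lambda \in \R_{>0}$.

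The only technical point worth checking carefully is the self-adjointness of $T_2$ under the Hermitian (as opposed to bilinear) $L^2$ inner product, which amounts to the observation that $\widetilde a_2$ is Hermitian with real coefficients and that consequently $(T_2 f, g)_{0,\O} = \widetilde a_2(T_2 f, T_2 g) = (f, T_2 g)_{0,\O}$. No real obstacle is anticipated here, and as a byproduct the same argument delivers strict positivity of the full spectrum, in agreement with the physical interpretation $\lambda = 1/k$ with $k > 0$.
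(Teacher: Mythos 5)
Your proposal is correct and follows essentially the same route as the paper: both exploit the triangular structure to eliminate $\phi_2$ through the coercive, Hermitian thermal operator (your $T_2$ is the paper's $\boldsymbol{M}^{-1}$ up to the factor $\Sigma_{1\to 2}$), reducing the problem to a symmetric, positive generalized eigenvalue problem in $\phi_1$ alone. The only difference is presentational — you conclude with an explicit Rayleigh-quotient computation, while the paper packages the same final step as an abstract lemma on self-adjoint positive-definite operator pencils (Lemma~\ref{lemma-apc}); both yield $\lambda\in\R_{>0}$.
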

    \begin{proof}

From equation \eqref{eq:strong2} we define $\boldsymbol{M}\phi_2\coloneqq-\nabla(D_2\nabla \phi_2)+\Sigma_{a2}\phi_2$, 
so that
\begin{equation*}
     \boldsymbol{M}\phi_2 =  \Sigma_{1\to2}\phi_1.
\end{equation*}
Its associated bilineal form, for $v\in H^1(\O,\mathbb{C})$ is 
\begin{equation*}
    m(u,v)=\int_{\O}D_2\nabla u\cdot\nabla \overline{v}\,dx+\int_{\O}\Sigma_{a2}u\overline{v}\,dx.
\end{equation*}

We immediately observe that $m(u,v)$ is hermitian, since $m(u,v) = \overline{m(u,v)}$. Moreover, there exists a positive constant $\widetilde{M}\coloneqq D_2+\Sigma_{a2}$ such that $|m(u,v) |\leq \widetilde{M}\|u\|_{1,\O}\|v\|_{1,\O}$.
On the other hand, there exists $\alpha_m\coloneqq \min\{D_2,\Sigma_{a2}\}$ such that  $m(\cdot,\cdot)$ is coercive in $H^1(\O,\C)$, i.e. it holds that $m(u,u) \geq \alpha_m\|u\|^2_{1,\O}$.

Hence, the operator $\boldsymbol{M}:H^1(\O,\C)\to H^{1}(\O,\C)'$ defined by
\begin{equation*}
    \langle \boldsymbol{M}u,v\rangle \coloneqq m(u,v),
\end{equation*}
is invertible, so that $\phi_2$ can be written as $\phi_2 = \boldsymbol{M}^{-1}\phi_1$.
We also notice that $\boldsymbol{M}^{-1}$ is bounded, since
\begin{equation*}
    \|\boldsymbol{M}^{-1}\|_{H^{-1}}\leq  \frac{1}{ \min\{D_2,\Sigma_{a2}\}}.
\end{equation*}
Now it is possible to write \eqref{eq:strong1} by only using $\phi_1$, 
\begin{equation}
\label{eq:pos-vals}
-\nabla (D_1\nabla \phi_1)+(\Sigma_{a1}+\Sigma_{1\to2})\phi_1 = \frac{1}{k}\left(\nu_1\Sigma_{f1}\phi_1+\nu_2\Sigma_{f2}\boldsymbol{M}^{-1}\phi_1\right).
\end{equation}
Lastly, note that $\boldsymbol{M}^{-1}$ is positive definite. We define $\phi \coloneqq \boldsymbol{M}^{-1}u$, with $u\in L^2(\O,\C)$. Now
\begin{multline*}
\alpha_m\|\phi\|_{1,\O}^2\leq m(\phi,\phi)=\left\langle\boldsymbol{M}\phi,\phi\right\rangle_{H^1(\O,\C)',H^1(\O,\C)}\\
=\left\langle u,\boldsymbol{M}^{-1} u\right\rangle_{H^1(\O,\C)',H^1(\O,\C)}=\left\langle\widehat{\boldsymbol{M}
^{-1}u},u\right\rangle_{H^1(\O,\C)'',H^1(\O,\C)'},    
\end{multline*}
where $\widehat{\boldsymbol{M}
^{-1}u}$ is the canonical embedding   $H^1(\O,\C)\to H^1(\O,\C)''$. We note that these computations are required to establish the coercivity of $\boldsymbol{M}^{-1}$ in $H^1(\O)'$ as a left-acting operator. This operator, as shown in the previous computations, arises as a right-side operator, thus justifying the use of the canonical embedding. Indeed, the operator appearing in \eqref{eq:pos-vals} is the canonical embedding, but we preferred to avoid such heavy notation for simplicity and without loss of generality due to the Hilbert space setting.

Let us define the following lineal operators
\begin{equation*}
\begin{aligned}
    \mathcal{L}\phi_1 &\coloneqq -\nabla (D_1\nabla \phi_1)+(\Sigma_{a1}+\Sigma_{1\to2})\phi_1 ,\\
    \mathcal{R}\phi_1&\coloneqq(\nu_1\Sigma_{f1}\phi_1+\nu_2\Sigma_{f2}\boldsymbol{M}^{-1}\phi_1),
\end{aligned}
\end{equation*}
and their associated sesquilinear forms,
\begin{equation*}
\begin{aligned}
    \ell(u,v) &=\int_{\O} D_1\nabla u\cdot\nabla v+(\Sigma_{a1}+\Sigma_{1\to2})u\overline{v}\,dx ,\\
    r(u,v)&=\int_{\O}(\nu_1\Sigma_{f1}+\nu_2\Sigma_{f2}\boldsymbol{M}^{-1})u\overline{v}\,dx.
\end{aligned}
\end{equation*}


    Proceeding as we did for $m(\cdot,\cdot)$, it is possible to prove that forms $l(\cdot,\cdot)$ and $r(\cdot,\cdot)$ are bounded and hermitian. Moreover, $l(\cdot,\cdot)$ is $H^1(\O,\C)$ coercive and $r(\cdot,\cdot)$ positive semi-definite. 

Equation \eqref{eq:pos-vals} is equivalent to the problem,
\begin{equation*}
    \mathcal{L}\phi_1 = \lambda\mathcal{R}\phi_1,\quad \lambda \coloneqq\frac{1}{k}.
\end{equation*}
Where $\mathcal{L}$ and $\mathcal{R}$ are both self-adjoint, and positive definite. From Lemma \ref{lemma-apc} it follows that under these hypotheses, $\lambda\in \R^+$. 

    \end{proof}

\section{The finite element method}

\label{sec:fem}
The aim of this section is to introduce the discrete version of the eigenvalue problem under consideration. Let $\mathcal{T}_h$ be a shape regular family of meshes which subdivide the domain $\bar \Omega$ into  
triangles/tetrahedra that we denote by $K$. Let us denote by $h_K$
the diameter of any element $K\in\mathcal{T}_h$ and let $h$ be the maximum of the diameters of all the
elements of the mesh, i.e. $h\coloneqq \max_{K\in \cT_h} \{h_K\}$. We define for $k\geq 1$, the following finite dimensional space
\begin{equation*}
\widetilde{V}_h\coloneqq\{v\in H^1(\O,\mathbb{C})\,\,:\,\, v|_K\in\mathbb{P}_k(K)\},
\end{equation*}
where $\mathbb{P}_k(K)$ is the space of polynomials of degree $k$ defined in $K\in\CT_h$, and with it we define the discrete solution space $V_h\coloneqq\widetilde{V}_h\times \widetilde{V}_h$. 

Now we are in position to introduce the discrete counterpart of \eqref{eq:eigen_problem} as follows:  find $\lambda_h$ in $\C$ and $(\phi_{1,h},\phi_{2,h})$ in $V_h\setminus\{0\}$ such that
\begin{equation} \label{eq:eigen_problem_disc}
    a((\phi_{1,h},\phi_{2,h}),(v_{1,h},v_{2,h})) = \lambda_h b((\phi_{1,h},\phi_{2,h}),(v_{1,h},v_{2,h}))  \quad \forall (v_{1,h},v_{2,h}) \in V_h.
\end{equation}
Let us introduce the discrete solution operator $T_h$, defined by 
\begin{equation*}
T_h:V\rightarrow V_h,\quad (f,g)\mapsto T_h(f,g)=(\widetilde{\phi}_{1,h},\widetilde{\phi}_{2,h}),
\end{equation*}
where $(\widetilde{\phi}_{1,h},\widetilde{\phi}_{2,h})\in V_h$ is the solution of the following discrete source problem:
\begin{equation}
\label{eq:fuente_disc}
    a((\widetilde{\phi}_{1,h},\widetilde{\phi}_{2,h}),(v_{1,h},v_{2,h}))=b((f,g),(v_{1,h},v_{2,h}))\quad \forall (v_{1,h},v_{2,h})\in  V_h.
\end{equation}
We can reproduce Lemma~\ref{lemma:source-invertibility} verbatim to establish that there exists a unique discrete solution $(\widetilde{\phi}_{1,h},\widetilde{\phi}_{2,h})\in V_h$ for
\eqref{eq:fuente_disc}, implying that $T_h$ is well defined. On the other hand, the discrete adjoint eigenvalue problem associated to \eqref{eq:eigen_problem_dual} reads as follows: find $\lambda_h^*$ in $\C$ and $(\phi_{1,h}^*,\phi_{2,h}^*)$ in $V_h\setminus\{0\}$ such that 
\begin{equation*} \label{eq:eigen_problem_dual_disc}
    a((v_{1,h},v_{2,h}),(\phi_{1,h}^*,\phi_{2,h}^*)) = \lambda^* b( (v_{1,h},v_{2,h}),(\phi_{1,h}^*,\phi_{2,h}^*))  \quad \forall (v_{1,h},v_{2,h}) \in V_h.
\end{equation*}
We also introduce the adjoint discrete solution operator $T_h^*$ defined by 
\begin{equation*}
T_h^*:V\rightarrow V_h,\quad (f,g)\mapsto T_h^*(f,g)=(\widetilde{\phi}_{1,h}^*,\widetilde{\phi}_{2,h}^*),
\end{equation*}
where $(\widetilde{\phi}_{1,h}^*,\widetilde{\phi}_{2,h}^*)\in V_h$ is the solution of the following source problem 
\begin{equation*}
\label{eq:fuente_dual_disc}
    a((v_{1,h},v_{2,h}),(\widetilde{\phi}^*_{1,h},\widetilde{\phi}^*_{2,h}))=b((v_{1,h},v_{2,h}),(f,g))\quad \forall (v_{1,h},v_{2,h})\in  V_h,
\end{equation*}
which is well-posed in virtue of our previous developments, thus $T_h^*$ is well defined.

\subsection{Convergence}
We begin with some definitions. Let $T: X\rightarrow X$ be a compact operator on a complex Hilbert space $X$. The resolvent operator associated to $T$ is the set of all the complex numbers $z\in\C$ for which $(zI-T)$ is invertible, i.e., 
\begin{equation*}
\rho(T)\coloneqq\{z\in\mathbb{C}\,:\, (zI-T)^{-1}\,\,\text{exists and is bounded}\}. 
\end{equation*}
Hence, the spectrum of $T$
is defined by $\sp(T)\coloneqq\mathbb{C}\setminus\rho(T)$. Since $T$ is compact, if $\mu\in\sp(T)$, then $\mu$ is an isolated  eigenvalue of $T$ and its generalized eigenspace  is finite dimensional. On the other hand, if $\mu$ is a non-vanishing eigenvalue of $T$ there exists a smallest positive integer $\alpha$ called the ascent of $(\mu I-T)$ such that the following relation holds
\begin{equation*}
\mathcal{N}((\mu I-T)^{\eta})=\mathcal{N}((\mu I-T)^{\eta+1}),
\end{equation*} 
where $\mathcal{N}$ denotes the null space. We denote by $m$  the algebraic multiplicity of $\mu$  which we define by $m\coloneqq\dim\mathcal{N}((\mu I-T)^{\eta})$ whereas the geometric multiplicity of $\mu$ is  $\dim\mathcal{N}(\mu\boldsymbol{I}-\boldsymbol{T})$. For simplicity, we consider the non-defective non-self-adjoint eigenvalues in this paper (i.e., the ascent $\eta=1$) , allowing that  the generalized eigenspace is the same as the eigenspace.

Now that we have our continuous and discrete formulations, the next step is to analyze convergence. More precisely, we are interested in the convergence of $T_h$ to $T$ as $h\rightarrow 0$ (and for their adjoint counterparts). If this convergence between operators is established, the theory of  \cite{MR1115235} can be applied due to the compactness of $T$. We prove the following result.

\begin{lemma}
\label{lmm:conv1} 
Let $(f,g)\in V$. Then, the following estimate holds
\begin{equation*}
\|(T-T_h)(f,g)\|_{V}\leq Ch^{\min\{k,s\}}\|(f,g)\|_Q,
\end{equation*}
where $s>0$ is the regularity exponent given in Lemma \ref{lmm:additional_regularity} and $C>0$ is independent of $h$.
\end{lemma}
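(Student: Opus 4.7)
The plan is to exploit the block lower-triangular structure of the source problem \eqref{eq:source-problem} and its discrete counterpart \eqref{eq:fuente_disc}, which decouples the error analysis into two scalar coercive elliptic problems. Writing $(\widetilde\phi_1,\widetilde\phi_2)=T(f,g)$ and $(\widetilde\phi_{1,h},\widetilde\phi_{2,h})=T_h(f,g)$, I would first handle the decoupled first component with a C\'ea-plus-interpolation-plus-regularity argument, and then control the coupled second component by introducing an auxiliary discrete function that restores Galerkin orthogonality.

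First, testing \eqref{eq:source-problem} and \eqref{eq:fuente_disc} against $(v_1,0)$ shows that $\widetilde\phi_1$ and $\widetilde\phi_{1,h}$ are the continuous and Galerkin solutions of a single scalar elliptic problem with bilinear form
$$a_1(u,v)\coloneqq\int_\O D_1\nabla u\cdot\nabla\overline v+(\Sigma_{a1}+\Sigma_{1\to 2})u\overline v\,d\O+\int_{\partial\O}\alpha_1 u\overline v\,dS,$$
which is coercive and bounded on $H^1(\O,\C)$ with constants depending only on $D_1$, $\Sigma_{a1}+\Sigma_{1\to 2}$, and $\alpha_1$. Since the right-hand side is identical in the continuous and discrete problems, Galerkin orthogonality holds exactly and C\'ea's lemma applies. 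Combining it with the standard Lagrange interpolation estimate on $\widetilde V_h$ and the $H^{1+s}$ regularity bound $\|\widetilde\phi_1\|_{1+s,\O}\leq C\|(f,g)\|_Q$ (which follows from the standard elliptic regularity for $L^2$ data in the spirit of Lemma~\ref{lmm:additional_regularity}) yields $\|\widetilde\phi_1-\widetilde\phi_{1,h}\|_1\leq Ch^{\min\{k,s\}}\|(f,g)\|_Q$.

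For the second component, testing with $(0,v_2)$ shows that both $\widetilde\phi_2$ and $\widetilde\phi_{2,h}$ satisfy scalar elliptic problems associated with the same coercive bilinear form
$$a_2(u,v)\coloneqq\int_\O D_2\nabla u\cdot\nabla\overline v+\Sigma_{a2}u\overline v\,d\O+\int_{\partial\O}\alpha_2 u\overline v\,dS,$$
but with inconsistent right-hand sides, namely $\Sigma_{1\to 2}(\widetilde\phi_1,\cdot)_0$ and $\Sigma_{1\to 2}(\widetilde\phi_{1,h},\cdot)_0$. I would introduce the auxiliary Galerkin approximation $\widehat\phi_{2,h}\in\widetilde V_h$ solving $a_2(\widehat\phi_{2,h},v_h)=\Sigma_{1\to 2}(\widetilde\phi_1,v_h)_0$ for all $v_h\in\widetilde V_h$, and split the error as $\widetilde\phi_2-\widetilde\phi_{2,h}=(\widetilde\phi_2-\widehat\phi_{2,h})+(\widehat\phi_{2,h}-\widetilde\phi_{2,h})$. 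The first piece is a standard C\'ea error bounded by $Ch^{\min\{k,s\}}\|\widetilde\phi_2\|_{1+s,\O}\leq Ch^{\min\{k,s\}}\|(f,g)\|_Q$; the second piece satisfies the perturbation identity $a_2(\widehat\phi_{2,h}-\widetilde\phi_{2,h},v_h)=\Sigma_{1\to 2}(\widetilde\phi_1-\widetilde\phi_{1,h},v_h)_0$, so coercivity of $a_2$ bounds it by $C\|\widetilde\phi_1-\widetilde\phi_{1,h}\|_0$, which is already controlled by the previous paragraph.

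The main obstacle is the inconsistency of the data in the second equation: unlike the first row of the block system, Galerkin orthogonality fails directly between $\widetilde\phi_2$ and $\widetilde\phi_{2,h}$ because the right-hand sides involve $\widetilde\phi_1$ and $\widetilde\phi_{1,h}$ respectively. The auxiliary Galerkin approximation $\widehat\phi_{2,h}$ resolves this by reinstating consistency and reducing the second-component error to a first-component error already bounded. Summing the two component bounds then yields the claimed $h^{\min\{k,s\}}\|(f,g)\|_Q$ estimate; note that no Aubin--Nitsche duality is needed here since the $H^1$ bound on $\widetilde\phi_1-\widetilde\phi_{1,h}$ already suffices.
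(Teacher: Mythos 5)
Your proposal follows the same overall strategy as the paper: exploit the block-triangular structure, decouple into two scalar coercive problems, apply C\'ea's lemma componentwise, and conclude with Lagrange interpolation and the $H^{1+s}$ regularity of Lemma~\ref{lmm:additional_regularity}. The one place where you genuinely diverge is the second component, and there your treatment is actually the more careful one. The paper writes the error equation $a_2(e_{2,h},v_{2,h})=\Sigma_{1\to 2}(e_{1,h},v_{2,h})_0$ and then immediately asserts the C\'ea bound $\|e_{2,h}\|_1\leq \frac{M_2}{\alpha_2}\,\mathrm{dist}(\widetilde\phi_2,\widetilde V_h)$, which as stated ignores the nonzero right-hand side coming from the data inconsistency between $\widetilde\phi_1$ and $\widetilde\phi_{1,h}$; a correct bound must carry an extra term proportional to $\|e_{1,h}\|_0$. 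Your auxiliary Galerkin approximation $\widehat\phi_{2,h}$ (with the continuous datum $\widetilde\phi_1$) restores exact Galerkin orthogonality for the first piece of the split and reduces the second piece, via coercivity of $a_2$, to the already-controlled first-component error. This yields the same final rate but makes the perturbation argument explicit, so your version is a rigorous completion of the step the paper elides. Two minor remarks: you correctly keep the Robin boundary terms inside $a_1$ and $a_2$ (the paper's proof drops them, though they are harmless for coercivity and continuity), and you should be aware that Lemma~\ref{lmm:additional_regularity} as stated controls the $H^{1+s}$ norms by $\|(f,g)\|_V$ rather than $\|(f,g)\|_Q$ — an imprecision inherited from the paper itself rather than a flaw in your argument.
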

\begin{proof}
According to the definitions of $T$ and $T_h$, for $(f,g)\in V$ we have that $(\widetilde{\phi}_1,\widetilde{\phi}_2)\coloneqq T(f,g)$ and  $(\widetilde{\phi}_{1,h},\widetilde{\phi}_{2,h})\coloneqq T_h(f,g)$. Proceeding  as in Lemma~\ref{lemma:source-invertibility}, we can prove a Ceá estimate without the ellipticity of $a$. We first consider $v_2 = 0$ in \eqref{eq:eigen_problem}, $v_{2,h}=0$ in \eqref{eq:eigen_problem_disc}, and set the error $e_{1,h} = \widetilde\phi_1 - \widetilde\phi_{1.h}$ to obtain a Galerkin orthogonality as
    $$ a_1(e_h, v_{1,h}) \coloneqq D_1(\nabla e_h, \nabla v_{1,h})_0 + (\Sigma_{a1} + \Sigma_{1\to 2})(e_h, v_{1,h})_0 = 0 \qquad\forall v_{1,h} \in \widetilde V_h.$$
We note again that $a_1(\cdot,\cdot)$ is elliptic with constant by $\alpha_1 \coloneqq \min\{D_1, \Sigma_{a1} + \Sigma_{1\to 2}\}$, and continuous with constant bounded by  $M_{1}\coloneqq \max\{D_1, \Sigma_{a1}+ \Sigma_{1\to 2}\}$. Everything together yields a Ceá estimate for $\widetilde\phi_1$, given by
\begin{equation}\label{eq:a1-convergence}
    \| e_{1,h} \|_1 \leq \frac{M_1}{\alpha_1} \text{dist}(\widetilde\phi_1, \widetilde V_h). 
\end{equation}
Setting $v_1=0$ in \eqref{eq:eigen_problem}, $v_{1,h}=0$ in \eqref{eq:eigen_problem_disc}, and $e_{2,h}= \widetilde\phi_2 - \widetilde\phi_{2,h}$, their difference yields the error equation
    $$ a_2(e_{2,h}, v_{2,h})\coloneqq D_2(\nabla e_{2,h}, \nabla v_{2,h})_0 + \Sigma_{a2}(e_{2,h}, v_{2,h})_0 = \Sigma_{1\to 2}(e_{1,h}, v_{2,h})_0 \quad \forall v_{2,h} \in \widetilde V_h.$$
Here, $a_2$ is elliptic with constant bounded by $\alpha_1\coloneqq \min\{D_2, \Sigma_{a2}\}$, and continuous with constant bounded by $M_2\coloneqq \max\{D_2, \Sigma_{a2}\}$. This gives the Ceá estimate 
    \begin{equation}\label{eq:a2-convergence}
        \| e_{2,h} \|_1 \leq \frac{M_2}{\alpha_2} \text{dist}(\widetilde\phi_2, \widetilde V_h).
    \end{equation}
We have thus established that a global Ceá estimate holds simply by adding \eqref{eq:a1-convergence} and \eqref{eq:a2-convergence}. Putting everything together we obtain the desired result: 
\begin{multline}
\label{eq:cea}
\|(T-T_h)(f,g)\|_V=\|(\widetilde{\phi}_1,\widetilde{\phi}_2)-(\widetilde{\phi}_{1,h},\widetilde{\phi}_{2,h})\|_V\\
\leq \widetilde C \inf_{(\widetilde{v}_{1,h},\widetilde{v}_{2,h})\in V_h}\|(\widetilde{\phi}_1,\widetilde{\phi}_2)-(\widetilde{v}_{1,h},\widetilde{v}_{2,h})\|_V,
\end{multline}
where $\widetilde C\coloneqq \max\{\frac{M_1}{\alpha_1}, \frac{M_2}{\alpha_2}\}$. Now, let $\mathcal{L}_h$ be the classic Lagrange interpolation operator (see for instance  \cite{MR2050138}). Hence, $\mathcal{L}_h\widetilde{\phi}_1$ and $\mathcal{L}_h\widetilde{\phi}_2$ belongs to $\tilde{V}_h$. Now, invoking the additional regularity $\widetilde{\phi}_1,\widetilde{\phi}_2\in H^{1+s}(\O)$, and using the approximation properties of  $\mathcal{L}_h$ on \eqref{eq:cea},  we have
\begin{multline*}
\|(T-T_h)(f,g)\|_V\leq \|\widetilde{\phi}_1-\mathcal{L}_h\widetilde{\phi}_1\|_{1,\O}+\|\widetilde{\phi}_2-\mathcal{L}_h\widetilde{\phi}_2\|_{1,\O}\\
\leq Ch^{\min\{k,s\}}(\|\widetilde{\phi}_1\|_{1+s,\O}+\|\widetilde{\phi}_2\|_{1+s,\O})\leq Ch^{\min\{k,s\}}\|(f,g)\|_{Q},
\end{multline*}
where the constant $C>0$ is independent of $h$. This concludes the proof.
\end{proof}

For the adjoint counterparts the previous result is also possible to be obtained under the same arguments. For simplicity we skip the details. 
\begin{lemma}
\label{lmm:conv2_dual} 
Let $(f,g)\in V$. Then, the following estimate holds
\begin{equation*}
\|(T^*-T^*_h)(f,g)\|_{V}\leq Ch^{\min\{k,s^*\}}\|(f,g)\|_Q,
\end{equation*}
where $s^*>0$ is the regularity exponent given in \eqref{eq:reg_dual_source} and $C>0$ is independent of $h$.
\end{lemma}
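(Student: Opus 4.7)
The plan is to mimic the argument of Lemma~\ref{lmm:conv1}, exploiting that the adjoint source problem has an analogous triangular block structure, but with the roles of $\widetilde\phi_1^*$ and $\widetilde\phi_2^*$ interchanged. Indeed, inspecting the block form
$$ \begin{bmatrix} \ten A_1 & -\Sigma_{1\to 2}\ten I \\ \ten 0 & \ten A_2 \end{bmatrix}\begin{bmatrix} \widetilde\phi_1^* \\ \widetilde\phi_2^* \end{bmatrix} = \begin{bmatrix} \nu_1\Sigma_{f1} f_1 \\ \nu_2\Sigma_{f2} f_1 \end{bmatrix},$$
the component $\widetilde\phi_2^*$ now decouples and solves a pure elliptic problem governed by $\ten A_2$, while $\widetilde\phi_1^*$ solves an elliptic problem governed by $\ten A_1$ whose right-hand side depends on $\widetilde\phi_2^*$. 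The same reversal occurs at the discrete level thanks to \eqref{eq:fuente_dual_disc}.

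First I would pick $v_1=0$ in \eqref{eq:source_problem_dual} and $v_{1,h}=0$ in the discrete adjoint source equation, obtaining the Galerkin orthogonality
$$ a_2^*(e_{2,h}^*, v_{2,h}) \coloneqq D_2(\nabla v_{2,h}, \nabla e_{2,h}^*)_0 + \Sigma_{a2}(v_{2,h}, e_{2,h}^*)_0 = 0 \qquad \forall v_{2,h}\in \widetilde V_h,$$
with $e_{2,h}^* \coloneqq \widetilde\phi_2^* - \widetilde\phi_{2,h}^*$. Ellipticity and continuity of $a_2^*$, with constants inherited from those of $\ten A_2$ exactly as in Lemma~\ref{lmm:conv1}, yield a Céa estimate $\|e_{2,h}^*\|_1 \leq (M_2/\alpha_2)\,\text{dist}(\widetilde\phi_2^*, \widetilde V_h)$. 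Next, choosing $v_2=0$ in \eqref{eq:source_problem_dual} and $v_{2,h}=0$ in its discrete counterpart, and writing $e_{1,h}^* \coloneqq \widetilde\phi_1^* - \widetilde\phi_{1,h}^*$, the subtraction produces the error equation
$$ a_1^*(e_{1,h}^*, v_{1,h}) = -\Sigma_{1\to 2}(v_{1,h}, e_{2,h}^*)_0 \qquad \forall v_{1,h}\in \widetilde V_h,$$
which is a coercive problem in $e_{1,h}^*$ with a bounded right-hand side. The standard Strang-type manipulation (add and subtract the interpolant, test with $v_{1,h} = \mathcal L_h \widetilde\phi_1^* - \widetilde\phi_{1,h}^*$) yields
$$ \|e_{1,h}^*\|_1 \leq C\bigl(\text{dist}(\widetilde\phi_1^*, \widetilde V_h) + \|e_{2,h}^*\|_0\bigr).$$

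Summing both bounds gives a global Céa-type estimate
$$ \|(T^* - T_h^*)(f,g)\|_V \leq \widetilde C^* \inf_{(w_{1,h},w_{2,h})\in V_h}\|(\widetilde\phi_1^*,\widetilde\phi_2^*) - (w_{1,h}, w_{2,h})\|_V.$$
Finally, I would insert the Lagrange interpolant $\mathcal L_h$ and invoke the adjoint regularity estimate \eqref{eq:reg_dual_source}, together with the standard approximation property $\|v - \mathcal L_h v\|_{1,\Omega} \leq C h^{\min\{k,s^*\}} \|v\|_{1+s^*,\Omega}$, to conclude
$$ \|(T^*-T_h^*)(f,g)\|_V \leq C h^{\min\{k,s^*\}}\bigl(\|\widetilde\phi_1^*\|_{1+s^*,\Omega} + \|\widetilde\phi_2^*\|_{1+s^*,\Omega}\bigr) \leq C h^{\min\{k,s^*\}} \|(f,g)\|_Q.$$

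The only mild technical point, and the reason the authors say ``the previous result is also possible to be obtained under the same arguments'', is keeping track of the fact that in the adjoint the coupling term feeds from $\widetilde\phi_2^*$ into the $\widetilde\phi_1^*$ equation rather than the other way around; once the order of elimination is reversed, the argument is identical to Lemma~\ref{lmm:conv1} and no new ideas are required.
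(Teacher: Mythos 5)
Your proposal is correct and follows exactly the route the paper intends: the authors omit this proof, stating only that it follows ``under the same arguments'' as Lemma~\ref{lmm:conv1}, and your adaptation supplies precisely those details, correctly observing that in the adjoint the triangular structure is reversed so that $\widetilde\phi_2^*$ must be estimated first and its error then fed into the $\widetilde\phi_1^*$ equation. If anything, you are slightly more careful than the paper's own proof of Lemma~\ref{lmm:conv1}, since you explicitly retain the $\|e_{2,h}^*\|_0$ contribution from the coupling term before absorbing it into the global C\'ea bound.
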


 Now, with Lemmas \ref{lmm:conv1} and \ref{lmm:conv2_dual}  at hand, together with  the results of \cite[Chapter IV]{MR0203473} and \cite[Theorem 9.1]{MR2652780}, we conclude that our numerical method does not introduce spurious eigenvalues.
\begin{theorem}
	\label{thm:spurious_free}
	Let $K\subset\mathbb{C}$ be any compact set contained in $\sp(T)$. Then, there exists $h_0>0$ such that $K\subset \sp(T_h)$ for all $h<h_0$.
\end{theorem}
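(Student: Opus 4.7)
The strategy is to reduce the statement to the classical spectral-perturbation theory for compact operators on Banach spaces, for which convergence in operator norm plus compactness of the limit operator is the only hypothesis needed. The first step is therefore to upgrade Lemma~\ref{lmm:conv1} into a bona fide operator-norm estimate on $\mathcal{L}(V,V)$. Since the inclusion $V\hookrightarrow Q$ is continuous, Lemma~\ref{lmm:conv1} implies
\begin{equation*}
\|T-T_h\|_{\mathcal{L}(V,V)} \;\leq\; C\, h^{\min\{k,s\}} \;\longrightarrow\; 0 \quad \text{as } h\to 0,
\end{equation*}
and an analogous bound for $\|T^*-T_h^*\|_{\mathcal{L}(V,V)}$ follows from Lemma~\ref{lmm:conv2_dual}. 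Combined with the compactness of $T$ (Corollary~\ref{lmm:T-props}), we are precisely in the framework of \cite[Ch.~IV]{MR0203473} and \cite[Theorem~9.1]{MR2652780}.

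The second step, interpreting the theorem in its \emph{spurious-free} sense (i.e. compact subsets of the resolvent set of $T$ remain in the resolvent set of $T_h$ for small $h$), I would argue by a Neumann series. For $K$ compact and contained in $\rho(T)$, the map $z \mapsto (zI-T)^{-1}$ is holomorphic on $\rho(T)$, hence continuous and uniformly bounded on $K$ by some $M_K<\infty$. Writing
\begin{equation*}
zI - T_h \;=\; (zI - T)\bigl(I \,-\, (zI-T)^{-1}(T_h-T)\bigr),
\end{equation*}
and choosing $h_0$ so that $\|T-T_h\|_{\mathcal{L}(V,V)} M_K < 1/2$ for all $h<h_0$ (which exists thanks to the norm convergence established above), the second factor is invertible via its Neumann series uniformly in $z\in K$. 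Consequently $zI-T_h$ is invertible on $K$, i.e.\ $K\subset \rho(T_h)$, which is the no-spurious-mode property.

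For the companion \emph{completeness} statement — namely that every isolated non-zero eigenvalue $\mu$ of $T$ with algebraic multiplicity $m$ is approximated by exactly $m$ eigenvalues (counted with multiplicity) of $T_h$ as $h\to 0$ — one would enclose $\mu$ in a small circle $\gamma\subset\rho(T)$ and apply the spurious-free step above to conclude that $\gamma\subset\rho(T_h)$ for $h<h_0$; then the spectral projectors
\begin{equation*}
E \;=\; \tfrac{1}{2\pi i}\oint_\gamma (zI-T)^{-1}\,dz,
\qquad
E_h \;=\; \tfrac{1}{2\pi i}\oint_\gamma (zI-T_h)^{-1}\,dz
\end{equation*}
converge in norm, so $\mathrm{rank}(E_h)=\mathrm{rank}(E)=m$ for small $h$. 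There is no substantial obstacle: the only nontrivial point is the compactness-plus-norm-convergence setup, and once that is in place every conclusion of Theorem~\ref{thm:spurious_free} follows mechanically from \cite{MR0203473,MR2652780}.
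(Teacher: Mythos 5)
Your proof is correct and follows essentially the same route as the paper, which at this point simply invokes \cite[Chapter IV]{MR0203473} and \cite[Theorem 9.1]{MR2652780}: your Neumann-series argument (together with the preliminary upgrade of Lemma~\ref{lmm:conv1} to a bound on $\|T-T_h\|_{\mathcal{L}(V,V)}$ via the continuous embedding $V\hookrightarrow Q$) is precisely the standard proof behind those citations. You were also right to read the statement as an assertion about resolvent sets; as printed, with $\sp$ in place of $\rho$, it is a typo, since $T_h$ has finite rank and a compact subset of $\sp(T)$ cannot in general be contained in $\sp(T_h)$.
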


\subsection{Error estimates}
\label{sec:conv}
Now our aim is to obtain  a priori error estimates for the eigenfunctions and eigenvalues. To do this, we first recall some definitions.

 Let $\mu$ be a nonzero isolated eigenvalue of $T$ with algebraic multiplicity $m$, i.e, $\mu$=$\mu_k=\mu_{k+1}=...=\mu_{k+m-1}$  and let $\Gamma$
be a simple closed curve of the complex plane lying in $\rho(T)$, which contains an eigenvalue $\mu$ and no other eigenvalues. The spectral projections of $E$ and $E^*$, associated to $T$ and $T^*$ and their respective eigenvalues $\mu$ and $\mu^*$, are defined, respectively, in the following way:
\begin{enumerate}
\item[a)] The spectral projector of $T$ associated to $\mu$ is $\displaystyle E\coloneqq\frac{1}{2\pi i}\int_{\Gamma} (z I-T)^{-1}\,dz.$
\item[b)] The spectral projector of $T^*$ associated to $\bar{\mu}$ is $\displaystyle E^*\coloneqq\frac{1}{2\pi i}\int_{\Gamma} (\bar{z}I-T^*)^{-1}\,d\bar{z},$
\end{enumerate}
where $I$ represents the identity operator and  $E$ and $E^*$ are the projections onto the generalized eigenspaces $R(E)$ and $R(E^*)$, respectively. Then $R(E)=\mathcal{N}((\mu I-T)^{\alpha})$ represents the generalized eigenspace related to $\mu$ and the operator $T$. Let us recall theta in our analysis $\alpha=1$.  For a more detailed discussion about these definitions, we refer to \cite[Section 6]{MR2652780} and \cite[Chapter II]{BO}.

The convergence in norm stated in Lemma \ref{lmm:conv1} gives as consequence  the  existence of  $m$ eigenvalues lying  in $\Gamma$, which we denote by  $\mu_h^{(1)},\ldots,\mu_h^{(m)}$, repeated according their respective multiplicities, that converge to $\mu$ as $h$ goes to zero. This motivates the  definition of the following discrete  spectral projection
$$
E_h\coloneqq\frac{1}{2\pi i}\int_{\Gamma} (\mu I-T_h)^{-1}\,d\mu,
$$
which is precisely a projection onto the discrete invariant subspace $R(E_h)$ of $T$, spanned by the generalized eigenvector of $T_h$ corresponding to 
 $\mu_h^{(1)},\ldots,\mu_h^{(m)}$.

Another necessary ingredient  for the error analysis is the \textit{gap} $\hdel(\cdot,\cdot)$ between two closed
subspaces $\mathfrak{X}$ and $\mathfrak{Y}$ of $L^2(\O,\mathbb{C})$, which is defined by 
$$
\hdel(\mathfrak{X},\mathfrak{Y})
\coloneqq\max\big\{\delta(\mathfrak{X},\mathfrak{Y}),\delta(\mathfrak{Y},\mathfrak{X})\big\}, \text{ where } \delta(\mathfrak{X},\mathfrak{Y})
\coloneqq\sup_{\underset{\left\|x\right\|_{0,\O}=1}{x\in\mathfrak{X}}}
\left(\inf_{y\in\mathfrak{Y}}\left\|x-\boldsymbol{y}\right\|_{0,\O}\right).
$$

Let us assume that there exist $r,r^*>0$ such that $R(E)\subset H^r(\O,\mathbb{C})$ and $R(E^*)\subset H^{r^*}(\O,\mathbb{C})$. Now we present the main result of this section.
\begin{theorem}
\label{thm:errors1}
Let $\mu$ be an eigenvalue of $T$ with algebraic multiplicity  $m$ and $\mu_h^{(k)}$, $k=1,...,m$ be the $m$ eigenvalues of $T_h$ approximating $\mu$. Then, for $k\geq 1$  the following estimates hold
$$
\hdel(R(E),R(E_h))\leq Ch^{\min\{k,r\}}\quad\text{and}\quad
|\mu-\widehat{\mu}_h|\leq  Ch^{\min\{k,r\}+\min\{k,r^*\}},
$$
where $\widehat{\mu}_h=\displaystyle\dfrac{1}{m}\sum_{k=1}^m\mu_h^{(m)}$ and $C>0$ is independent of $h$.
\end{theorem}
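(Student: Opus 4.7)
The plan is to invoke the classical spectral approximation framework of Babuška--Osborn for non-selfadjoint compact operators, which is precisely the setting established by Lemmas~\ref{lmm:conv1}--\ref{lmm:conv2_dual} together with Corollaries~\ref{lmm:T-props}--\ref{lmm:T-adjoint-props}. The convergence in norm $T_h\to T$ and $T_h^*\to T^*$, together with the compactness of the limit operators, is exactly what is required to apply \cite[Theorem 7.1 and Theorem 7.3]{MR2652780} (or equivalently the results in \cite[Chapter IV]{MR0203473} and \cite{MR1115235}). From Theorem~\ref{thm:spurious_free} we already know that the discrete eigenvalues cluster correctly around $\mu$, so the whole analysis reduces to quantifying two operator norms restricted to the invariant subspaces.

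For the gap estimate, I would use the abstract bound
\begin{equation*}
\hdel(R(E),R(E_h))\;\le\; C\,\bigl\|(T-T_h)|_{R(E)}\bigr\|_{V},
\end{equation*}
which is standard for clusters of non-defective eigenvalues under norm convergence. Next, since $R(E)$ is spanned by eigenfunctions of $T$, for each $(w_1,w_2)\in R(E)$ Lemma~\ref{lmm:additional_regularity} yields the extra regularity $w_1,w_2\in H^{1+r}(\O,\C)$. Combining this with the Cea-type bound inside the proof of Lemma~\ref{lmm:conv1} and the approximation properties of the Lagrange interpolant on $\widetilde V_h$ gives $\|(T-T_h)(w_1,w_2)\|_V\le C h^{\min\{k,r\}}\|(w_1,w_2)\|_V$, and taking the supremum over the finite-dimensional space $R(E)$ produces the desired rate $h^{\min\{k,r\}}$.

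For the doubled rate on the eigenvalues I would rely on Osborn's identity for the arithmetic mean of a cluster, namely
\begin{equation*}
|\mu-\widehat{\mu}_h|\;\le\; \frac{1}{m}\sum_{j=1}^{m}\bigl|\bigl\langle (T-T_h)\varphi_j,\varphi_j^{*}\bigr\rangle\bigr| \;+\; C\,\bigl\|(T-T_h)|_{R(E)}\bigr\|_{V}\,\bigl\|(T^{*}-T_h^{*})|_{R(E^{*})}\bigr\|_{V},
\end{equation*}
where $\{\varphi_j\}$ and $\{\varphi_j^{*}\}$ are dual bases of $R(E)$ and $R(E^{*})$. The leading term can also be bounded by the product of the two restricted operator errors by inserting $T_h$ and using Galerkin orthogonality (so that one factor is the primal error evaluated on an eigenfunction and the other is the adjoint error evaluated on its dual). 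Applying Lemma~\ref{lmm:conv1} with the regularity $R(E)\subset H^{1+r}$ and Lemma~\ref{lmm:conv2_dual} with $R(E^{*})\subset H^{1+r^{*}}$, both followed by polynomial interpolation estimates, gives factors of order $h^{\min\{k,r\}}$ and $h^{\min\{k,r^{*}\}}$ respectively, whose product yields the announced bound.

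The main obstacle is the bookkeeping in the eigenvalue estimate: one must verify that the bilinear expression in Osborn's formula truly factors into independent primal and adjoint contributions despite the non-symmetric coupling between $\phi_1$ and $\phi_2$ in $a(\cdot,\cdot)$, and that the dual basis pairing is normalized so that the constants are $h$-independent. Once this algebraic structure is in place, everything else is a direct consequence of the convergence-in-norm results already established together with the Sobolev regularity of eigenfunctions and their adjoints.
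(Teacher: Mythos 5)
Your proposal is correct and follows essentially the same route as the paper: the gap bound comes directly from the norm convergence of Lemma~\ref{lmm:conv1} restricted to $R(E)$, and the doubled eigenvalue rate is obtained from Osborn's identity for the cluster mean, with the duality-pairing term upgraded to a product of primal and adjoint errors via Galerkin orthogonality and the continuity of $a(\cdot,\cdot)$, exactly as in \eqref{eq:doble_2}. The "bookkeeping obstacle" you flag is resolved in the paper precisely by the chain $|\langle(T-T_h)\varphi_\ell,\varphi_\ell^*\rangle|\le C\,|a((T-T_h)\varphi_\ell,\varphi_\ell^*-v_h^*)|$ followed by boundedness of $a$ and the adjoint approximation estimate.
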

\begin{proof}
The gap between the eigenspaces is a direct consequence of the Lemma \ref{lmm:conv1}. 
For  the double order of convergence for the eigenvalues we procede as follows: let $\{(\phi_{1,\ell},\phi_{2,\ell})\}_{\ell=1}^m$ be such that $T (\phi_{1,\ell},\phi_{2,\ell})=\mu (\phi_{1,\ell},\phi_{2,\ell})$, for $\ell=1,\ldots,m$. An adjoint basis for $R(E^*)$ is $\{(\phi_{1,\ell}^*,\phi_{2,\ell}^*)\}_{\ell=1}^m$ that satisfies $a((\phi_{1,\ell},\phi_{2,\ell}),(\phi_{1,\ell}^*,\phi_{2,\ell}^*))=\delta_{\ell,l},$
where $\delta_{\ell,l}$ represents the Kronecker delta.
On the other hand, the following identity holds
\begin{multline*}
|\mu-\widehat{\mu}_h|\leq  \frac{1}{m}\sum_{\ell=1}^m|\langle(T-T_h)(\phi_{1,\ell},\phi_{2,\ell}),(\phi_{1,\ell}^*,\phi_{2,\ell}^*) \rangle|\\
+\|(T-T_h)|_{R(E)} \|_{V} \|(T^*-T_h^*)|_{R(E^*)}\|_{V},
\end{multline*}
where $\langle\cdot,\cdot\rangle$ denotes the corresponding duality pairing. Observe that from Lemmas \ref{lmm:conv1}  and \ref{lmm:conv2_dual} , together with the regularity of the generalized spaces $R(E)$ and $R(E^*)$, we have 
\begin{equation}
\label{eq:doble_1}
\|(T-T_h)|_{R(E)} \|_{V} \|(T^*-T_h^*)|_{R(E^*)}\|_{V}\leq Ch^{\min\{k,r\}+\min\{k,r^*\}}.
\end{equation}
On the other hand,  for the first term on the right-hand side we note that
\begin{multline}
\label{eq:doble_2}
|\langle(T-T_h)(\phi_{1,\ell},\phi_{2,\ell}),(\phi_{1,\ell}^*,\phi_{2,\ell}^*)\rangle|\leq C|a((T-T_h)(\phi_{1,\ell},\phi_{2,\ell}),(\phi_{1,\ell}^*,\phi_{2,\ell}^*))|\\
= C\inf_{(v_{1,\ell,h}^*,v_{2,\ell,h}^*)\in V_h} |a((T-T_h)(\phi_{1,\ell},\phi_{2,\ell}),(\phi_{1,\ell}^*,\phi_{2,\ell}^*)-(v_{1,\ell,h}^*,v_{2,\ell,h}^*))|\\
\leq C |a((T-T_h)(\phi_{1,\ell},\phi_{2,\ell}),(\phi_{1,\ell}^*,\phi_{2,\ell}^*)-(\phi_{1,\ell,h}^*,\phi_{2,\ell,h}^*))|\\
\leq C \|(T-T_h)(\phi_{1,\ell},\phi_{2,\ell})\|_V\|(\phi_{1,\ell}^*,\phi_{2,\ell}^*)-(\phi_{1,\ell,h}^*,\phi_{2,\ell,h}^*)\|_V\\
\leq C h^{\min\{k,r\}+\min\{k,r^*\}}\|(\phi_{1,\ell},\phi_{2,\ell})\|_V\|(\phi_{1,\ell}^*,\phi_{2,\ell}^*)\|_V.
\end{multline}
Hence, combining \eqref{eq:doble_1} and \eqref{eq:doble_2}, we conclude the proof.
\end{proof}

\section{Numerical tests}
\label{sec:numerics}
In this section we conduct several numerical tests to assess the performance of our scheme across different geometries and physical configurations. These are implemented using the DOLFINx software \cite{barrata2023dolfinx,scroggs2022basix}, where the SLEPc eigensolver \cite{hernandez2005slepc} and the MUMPS linear solver are employed to solve the resulting generalized eigenvalue problem. The convergence rates for each eigenvalue are determined using least-square fitting and highly refined meshes.  More precisely, if $\lambda_h$ is a discrete complex eigenvalue, then the rate of convergence $\eta$ is calculated by extrapolation and the least square  fitting
$$
\lambda_{h}\approx \lambda_{\text{extr}} + Ch^{\eta},
$$
where $\lambda_{\text{extr}}$ is the extrapolated eigenvalue given by the fitting. 

In what follows, we denote the mesh resolution by $N$, which is connected to the mesh-size $h$ through the relation $h\sim N^{-1}$. We also denote the number of degrees of freedom by $\texttt{dof}$. The relation between $\texttt{dof}$ and the mesh size is given by $h\sim\texttt{dof}^{-1/n}$, with $n\in\{2,3\}$. 

In the forthcoming experiments, unless otherwise stated, the values for the diffusion  and cross-sections parameters are positive and constant, as we show in Table~\ref{tab:parameters}. 
\begin{table}[H]
    \centering
    \begin{tabular}{c|c}\toprule
 parameter&value\\\midrule
        $D_1$ & $1.0$\\
        $D_2$ & $0.5$\\
        $\Sigma_{a1}$ & $0.2$\\
        $\Sigma_{a2}$ & $0.1$\\
        $\nu_1\Sigma_{f1}$ & $0.3$\\
        $\nu_2\Sigma_{f2}$ & $0.1$\\
        $S_{12}$ & $0.1$\\ \bottomrule
    \end{tabular}
    \caption{Physical and geometrical parameters for the experiments}
    \label{tab:parameters}
\end{table}

Moreover, our interest is to explore the proposed method in different contexts. To do this, we focus our attention on the following scenarios:
    \begin{itemize}
        \item Convergence tests: at this point, our aim is to computationally validate the theoretical results, more precisely, the double order of convergence given by Theorem \ref{thm:errors1}. To do this, we will compute the spectrum on the following domains:
            \begin{itemize}
                \item A unit square domain.
                \item An L-shaped domain.
                \item A circular domain.
                \item A unit cube.
            \end{itemize}
        \item A realistic simulation on the standard IAEA benchmark \cite{iaea}.
    \end{itemize}

\subsection{Unit square with Dirichlet boundary conditions}

In this test we consider the unit square $\O\coloneqq(0,1)^2$ as computational domain with homogeneous Dirichlet boundary conditions on $\partial\O$. We expect for this geometrical configuration sufficiently regular eigenfunctions and hence, optimal orders of convergence for the method. In  Table~\ref{tab:eigen_convergence_vertical_1} we report the first five computed eigenvalues  for different meshes and different polynomial degrees. In the column "Order" we present the order of convergence obtained with our least square fitting whereas the column $\lambda_{ext}$ reports the extrapolated values.

\begin{table}[H]
    \centering
    \renewcommand{\arraystretch}{1.2}
    \setlength{\tabcolsep}{5pt}
    \begin{tabular}{c c c c c c c c}
        \toprule
        $k$ &  & $N=8$ & $N=16$ & $N=32$ & $N=64$ & Order & $\lambda_{ext}$ \\
        \midrule
        \multirow{5}{*}{1}
        &  & 69.1292 & 67.2100 & 66.7333 & 66.6144 & 2.01 & 66.5757\\
        &  & 176.2102 & 167.9989 & 165.9527 & 165.4414 & 2.00 & 165.2679\\
        &  & 182.7912 & 169.5538 & 166.3355 & 165.5367 & 2.04 & 165.2945 \\
        &  & 302.8717 & 274.0154 & 266.4978 & 264.6009 & 1.95 & 263.9141\\
        &  & 380.7322 & 342.3123 & 332.8873 & 330.5443 & 2.02 & 329.7751\\
        \midrule
        \multirow{5}{*}{2}
        &  & 66.5895 & 66.5757 & 66.5748 & 66.5747 & 3.97 & 66.5747 \\
        &  & 165.4041 & 165.2797 & 165.2715 & 165.2710 & 3.93 & 165.2710 \\
        &  & 165.5162 & 165.2870 & 165.2720 & 165.2710 & 3.94 & 165.2710\\
        &  & 264.8394 & 264.0262 & 263.9709 & 263.9673 & 3.88 & 263.9669\\
        &  & 331.0075 & 329.8483 & 329.7698 & 329.7648 & 3.89 & 329.7644\\
        \midrule
        \multirow{5}{*}{3}
         &&$66.5748065$  & $66.5747707$  & $66.574770$  & $66.5747701$  & 6.03 & 66.5747 \\
        && $165.271954$ & $165.271049$ & $165.271035$ & $165.271035$ & 5.98 & 165.2710 \\
        && $165.272449$ & $165.271057$ & $165.271035$ & $165.271035$ & 5.99 &  165.2710\\
        && $263.976213$ & $263.967279$ & $263.967137$ & $263.967135$ & 5.98 & 263.9671 \\
        && $329.779588$ & $329.764758$ & $329.764520$ & $329.764516$ & 5.96 & 329.7645 \\
        \bottomrule
    \end{tabular}
    \caption{Convergence behavior of the first five lowest computed eigenvalues on the unit square domain, with homogeneous Dirichlet boundary conditions and polynomial degrees $k=1,2,3$.}
    \label{tab:eigen_convergence_vertical_1}
\end{table}

We observe in Table \ref{tab:eigen_convergence_vertical_1} that the optimal order of convergence is attained with our method for finite element spaces of orders 1, 2, and 3. In fact, the quadratic order of convergence is clear when the spectrum is approximated. Moreover, all the eigenvalues that we have obtained are real, despite to the non-symmetry of the eigenvalue problem. Also in Figure \ref{fig:four_images_square} we present plots of the first four approximated eigenfunctions. We stress that since the computed eigenvalues are real, the plots represent precisely real eigenfunctions whith imaginary parts equal to zero.

\begin{figure}[h!]
    \centering
    $\phi_1$\\
\includegraphics[width=0.22\textwidth]{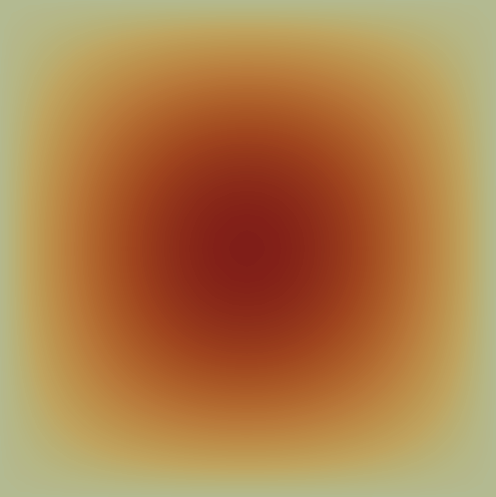}
    \includegraphics[width=0.22\textwidth]{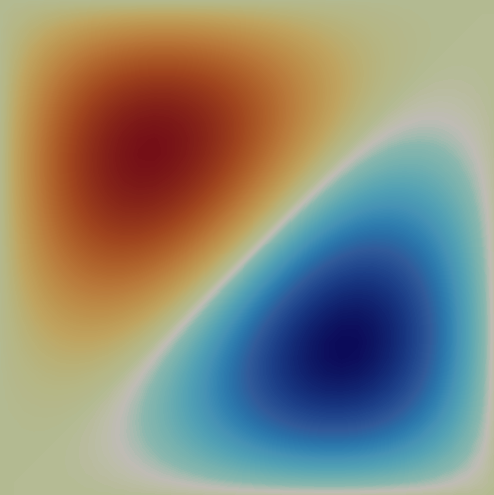}
    \includegraphics[width=0.22\textwidth]{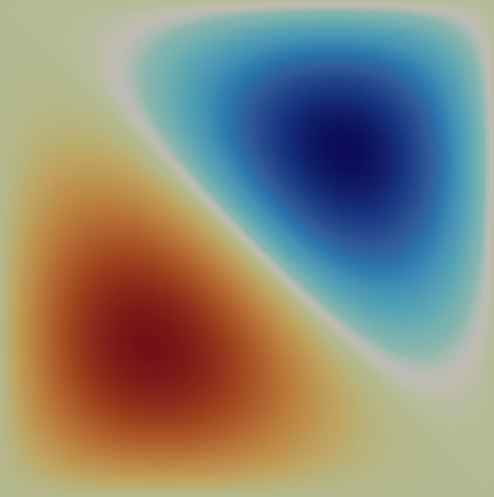}
    \includegraphics[width=0.22\textwidth]{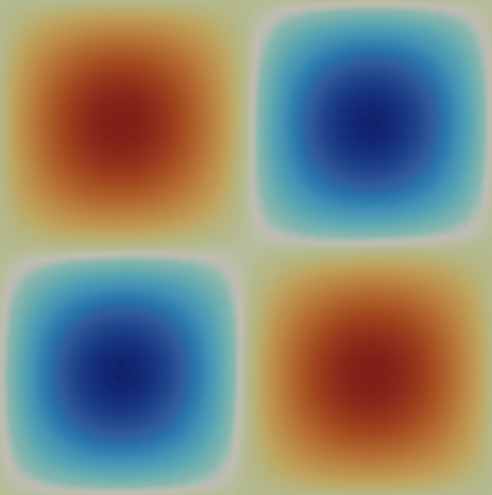}
    \includegraphics[width=0.065\textwidth]{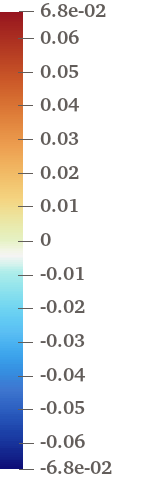}
    $\phi_2$\\
    \includegraphics[width=0.22\textwidth]{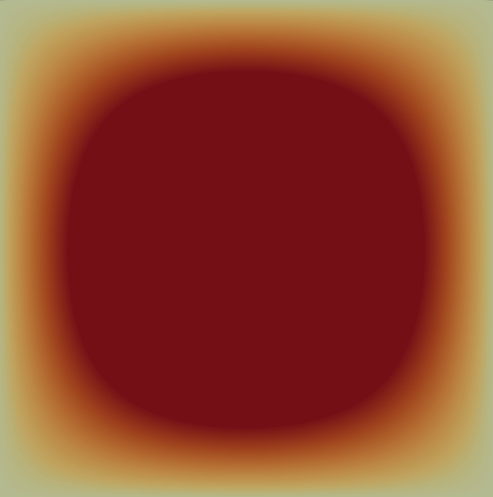}
    \includegraphics[width=0.22\textwidth]{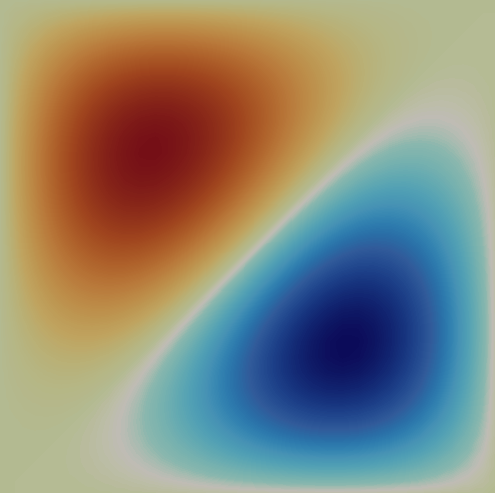}
    \includegraphics[width=0.22\textwidth]{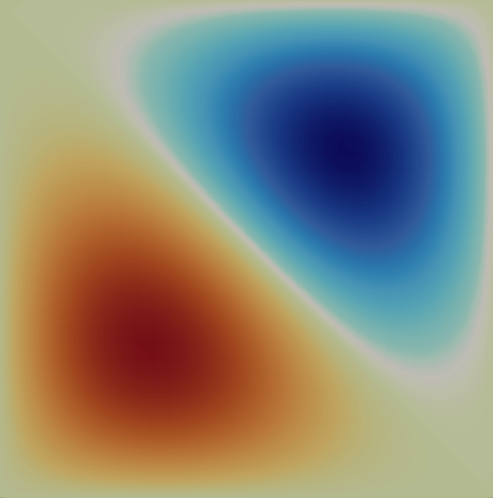}
    \includegraphics[width=0.22\textwidth]{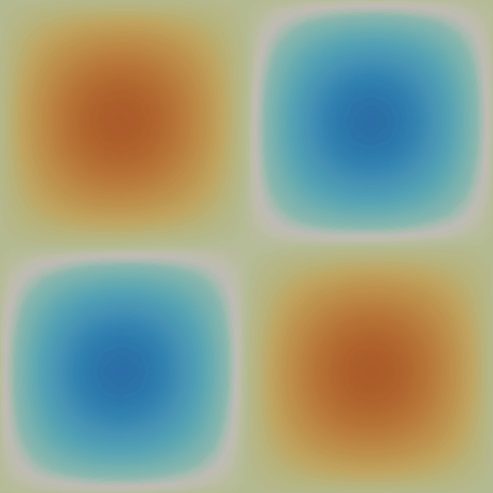}
    \includegraphics[width=0.07\textwidth]{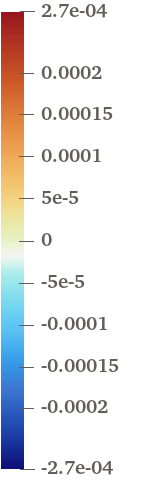}
  
    \caption{Eigenfunctions associated to the first four eigenvalues on the unit square domain with $k=1$. }
    \label{fig:four_images_square}
\end{figure}

\subsection{L-shaped with Dirichlet boundary conditions}
For this test we consider the non-convex domain commonly refered to as the L-shaped domain. The re-entrant angle on this geometry leads to non-sufficiently smooth eigenfunctions that affect the order of convergence of the associated eigenvalues. Once again, we explore the method for this geometry considering $k=1,2,3$ as polynomial approximations and different refinement levels in order to compute the order of convergence. 
\begin{table}[H]
    \centering
    \renewcommand{\arraystretch}{1.2}
    \setlength{\tabcolsep}{5pt}
    \begin{tabular}{c c c c c c c c}
        \toprule
        $k$ &  & $N=8$ & $N=16$ & $N=32$ & $N=64$ & Order & $\lambda_{ext}$ \\
        \midrule
        \multirow{5}{*}{1}
        &  & 137.1459 & 131.8404 & 130.1638 & 129.6033 & 1.65 & 129.3614\\
        &  & 213.7216 & 206.2422 & 204.1467 & 203.5943 & 1.85 & 203.3637\\
        &  & 283.0006 & 268.8414 & 265.2298 & 264.2860 & 1.97 & 263.9817\\
        &  & 439.0850 & 405.3242 & 397.2384 & 395.1080 & 2.04 & 394.5212\\
        &  & 486.0530 & 442.4305 & 430.9624 & 427.6495 & 1.90 & 426.5684\\
        \midrule
        \multirow{5}{*}{2}
        &  & 129.8972 &129.5474 & 129.4085 & 129.3477 & 1.30 & 129.3096\\
        &  & 203.5787 & 203.4233 & 203.4094 &203.4079 & 3.46 & 203.4079\\
        &  & 264.2352 & 263.9849 & 263.9683 &263.9672 & 3.91 & 263.9671\\
        &  & 395.2765 & 394.4597 & 394.4018 &394.3977 & 2.82 & 394.3974\\
        &  & 428.5742 & 426.9219 & 426.5294 & 426.3784 & 1.95 & 426.3566\\
        \midrule
        \multirow{5}{*}{3}
        && 129.5394 & 129.4028 & 129.3475 & 129.3233 & 1.28 & 129.3076 \\
        && 203.4137 & 203.4086 & 203.4078 & 203.4077 & 2.65 & 203.4076 \\
        && 263.9685 & 263.9671 & 263.9671 & 263.9671 & 5.80 & 263.9671 \\
        && 394.4082 & 394.3979 & 394.3975 & 394.3974 & 4.41 & 394.3974 \\
        && 426.8356 & 426.5116 & 426.3778 & 426.3188 & 1.25 & 426.2780 \\
        \bottomrule
    \end{tabular}
    \caption{Convergence behavior of the first five lowest computed eigenvalues on the L shaped domain, with homogeneous Dirichlet boundary conditions and polynomial degrees $k=1,2,3$.}
    \label{tab:eigen_convergence_L_shaped}
\end{table}

From Table \ref{tab:eigen_convergence_L_shaped} we observe that, as expected, the order of convergence of some eigenvalues is deteriorated due to the singularity of the associated eigenfunctions. This lack of optimal order is independent of the degree on the polynomial approximation.

\subsection{Circular domain with Dirichlet boundary conditions}

In this section we show that the method is capable of approximating the eigenvalues on a domain that goes beyond the developed theory. More specifically, we consider a circular domain defined by $\O\coloneqq\{(x,y)\in\mathbb{R}^2\,\,: x^2+y^2<1\}$ with homogeneous Dirichlet boundary conditions. For this domain, all the eigenfunctions are smooth but, the approximation of a curved domain with triangles will lead to a loss on the convergence order of the method when $k>1$. We report our results on Table \ref{tab:eigen_convergence_circle}.
\begin{table}[H]
    \centering
    \renewcommand{\arraystretch}{1.2}
    \setlength{\tabcolsep}{5pt}
    \begin{tabular}{c c c c c c c c}
        \toprule
        $k$ &  & $N=8$ & $N=16$ & $N=32$ & $N=64$ & Order & $\lambda_{ext}$ \\
        \midrule
        \multirow{5}{*}{1}
        &  & 20.1587 & 20.0807 & 20.0605 & 20.0555 & 1.96 & 20.0536\\
        &  & 50.3899 & 49.8901 & 49.7605 & 49.7280 & 1.96 & 49.7165\\
        &  & 50.3956 & 49.8910 & 49.7605 & 49.7280 & 1.96 & 49.7162\\
        &  & 90.8556 & 89.2505 & 88.8327 & 88.7280 & 1.95 & 88.6893\\
        &  & 90.8715 & 89.2535 & 88.8328 & 88.7280 & 1.95 & 88.6885\\
        \midrule
        \multirow{5}{*}{2}
        &  & 20.1043 & 20.0664 & 20.0569 & 20.0546 & 2.00 & 20.0538\\
        &  & 49.8468 & 49.7494 & 49.7251 & 49.7191 & 2.01 & 49.7171\\
        &  & 49.8471 & 49.7494 & 49.7251 & 49.7191 & 2.01 & 49.7171\\
        &  & 88.9324 & 88.7512 & 88.7072 & 88.6964 & 2.04 & 88.6930\\
        &  & 88.9331 & 88.7512 & 88.7072 & 88.6964 & 2.05 & 88.6931\\
        \midrule
        \multirow{5}{*}{3}
        && 20.1035 & 20.0663 & 20.0569 & 20.0546 & 1.98 & 20.0537 \\
        && 49.8433 & 49.7490 & 49.7251 & 49.7191 & 1.98 & 49.7170 \\
        && 49.8434 & 49.7490 & 49.7251 & 49.7191 & 1.98 & 49.7170 \\
        && 88.9196 & 88.7501 & 88.7071 & 88.6964 & 1.98 & 88.6926 \\
        && 88.9196 & 88.7501 & 88.7071 & 88.6964 & 1.98 & 88.6926 \\
        \bottomrule
    \end{tabular}
    \caption{Convergence behavior of the first five lowest computed eigenvalues on the unit circle shaped domain, with homogeneous Dirichlet boundary conditions and polynomial degrees $k=1,2,3$.}
    \label{tab:eigen_convergence_circle}
\end{table}
Table~\ref{tab:eigen_convergence_circle} shows that the method  works as expected. More precisely, all the eigenvalues are approximated according to the extrapolated values that we report and the order of convergence is affected for different polynomial approximations. More precisely,  all the computed orders of convergence are $2$ due to the variational crime that is committed when a curved domain is approximated by the straight lines of triangles.

\begin{figure}[H]
    \centering
    $\phi_1$\\
\includegraphics[width=0.22\textwidth]{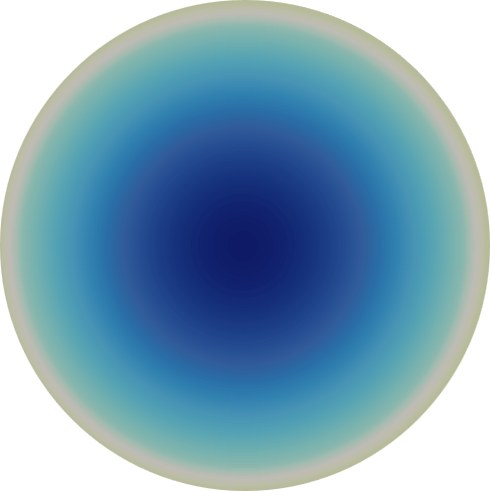}
    \includegraphics[width=0.22\textwidth]{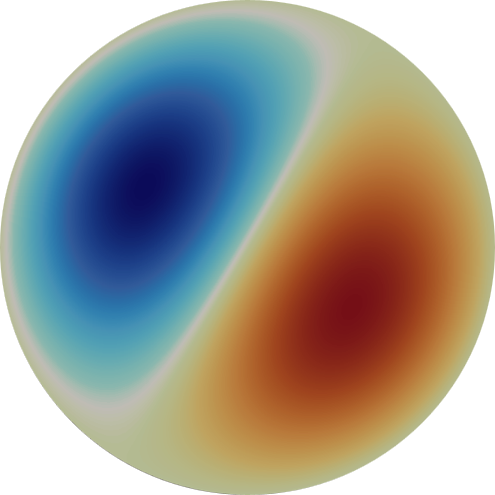}
    \includegraphics[width=0.22\textwidth]{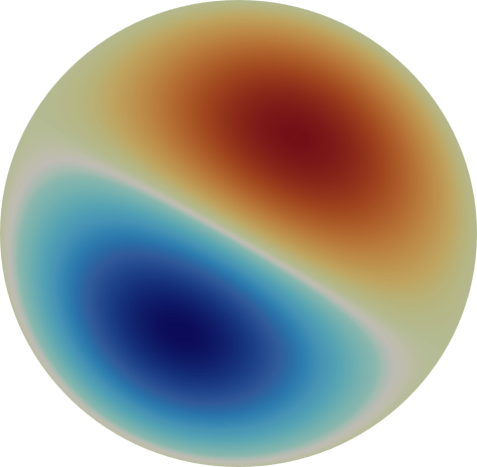}
    \includegraphics[width=0.22\textwidth]{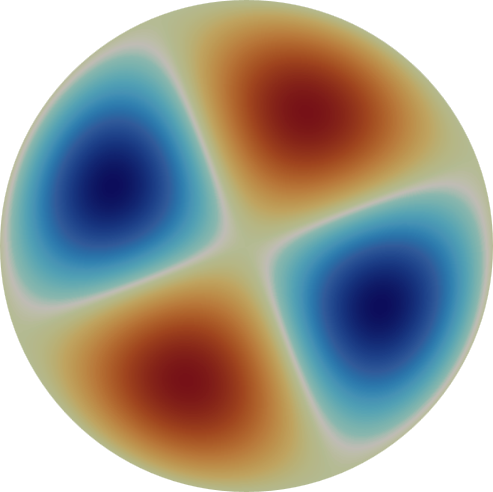}
    \includegraphics[width=0.07\textwidth]{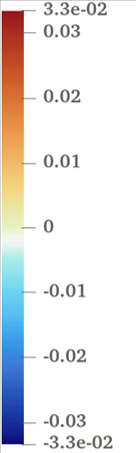}
    $\phi_2$\\
    \includegraphics[width=0.22\textwidth]{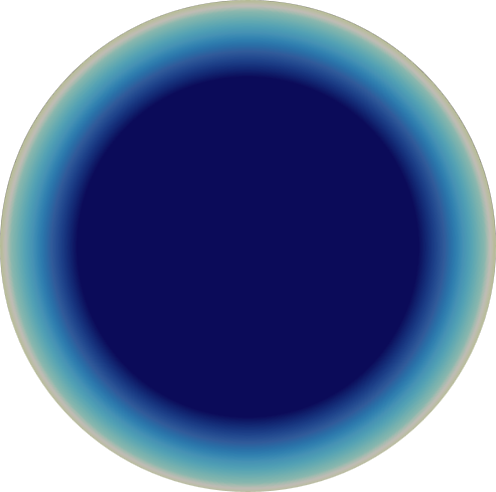}
    \includegraphics[width=0.22\textwidth]{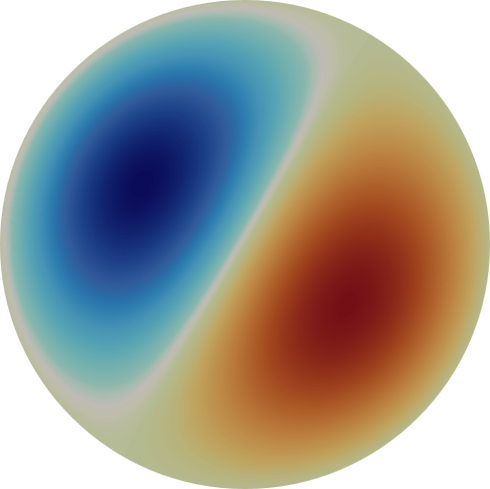}
    \includegraphics[width=0.22\textwidth]{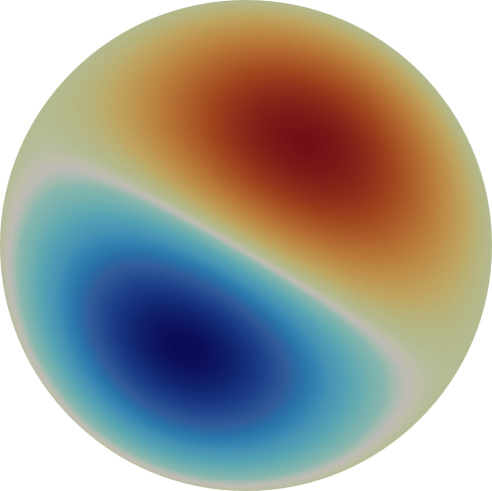}
    \includegraphics[width=0.22\textwidth]{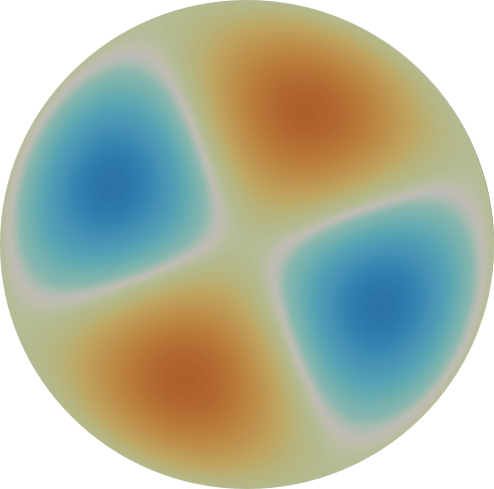}
    \includegraphics[width=0.07\textwidth]{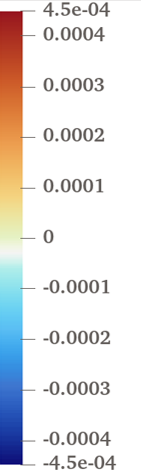}
  
    \caption{Eigenfunctions associated to the first  four eigenvalues on the unit circle d with $k=1$.}
    \label{fig:four_images_circle}
\end{figure}

\subsection{Unit cube with Dirichlet boundary conditions}
For this test we consider the unit cube $\O\coloneqq(0,1)^3$. Once again, the eigenfunctions result to be sufficiently smooth due to the convexity of $\O$ and the boundary conditions, implying an optimal order of convergence for the eigenvalues. For simplicity, we have considered only $k=1$ for the polynomial approximation due to the computational cost of solving an eigenvalue problem in 3D. In Table \ref{tab:eigen_convergence_cube} we present the results of our test where we report the optimal order of convergence for the eigenvalues.
\begin{table}[H]
    \centering
    \renewcommand{\arraystretch}{1.2}
    \setlength{\tabcolsep}{5pt}
    \begin{tabular}{c c c c c c}
        \toprule
        $N=8$ & $N=16$ & $N=32$ & $N=64$ & Order & $\lambda_{ext}$ \\
        \midrule
        125.7749 & 105.8681 & 101.0620 &  99.8701 & 2.04 & 99.4939 \\
        277.0978 & 217.4688 & 202.9682 & 199.3684 & 2.04 & 198.2791 \\
        277.0978 & 217.4688 & 202.9682 & 199.3684 & 2.04 & 198.2791 \\
        331.8180 & 229.4097 & 205.7724 & 200.0577 & 2.11 & 198.5133 \\
        487.7576 & 345.4144 & 309.0784 & 299.9281 & 1.97 & 296.6781 \\
        \bottomrule
    \end{tabular}
    \caption{Convergence behavior of the first five lowest computed eigenvalues on the unit cube domain, with homogeneous Dirichlet boundary conditions and polynomial degree $k=1$.}
    \label{tab:eigen_convergence_cube}
\end{table}

\subsection{Fuel Assembly simulation}
We conclude the numerical  section validating our method  with the IAEA 2D benchmark  given by \cite{iaea}, which consists on a quarter core model of a nuclear reactor. 

In Figure~\ref{fig:geo-iaea} we show the geometry used, which includes different material domains. In each domain we use different diffusion coefficients and cross sections, as shown in Table~\ref{tab:cross-ct}, together with the  required parameters.
\begin{figure}[H]
    \centering
    \begin{subfigure}[t]{0.4\textwidth}
        \centering
        \includegraphics[width=\textwidth]{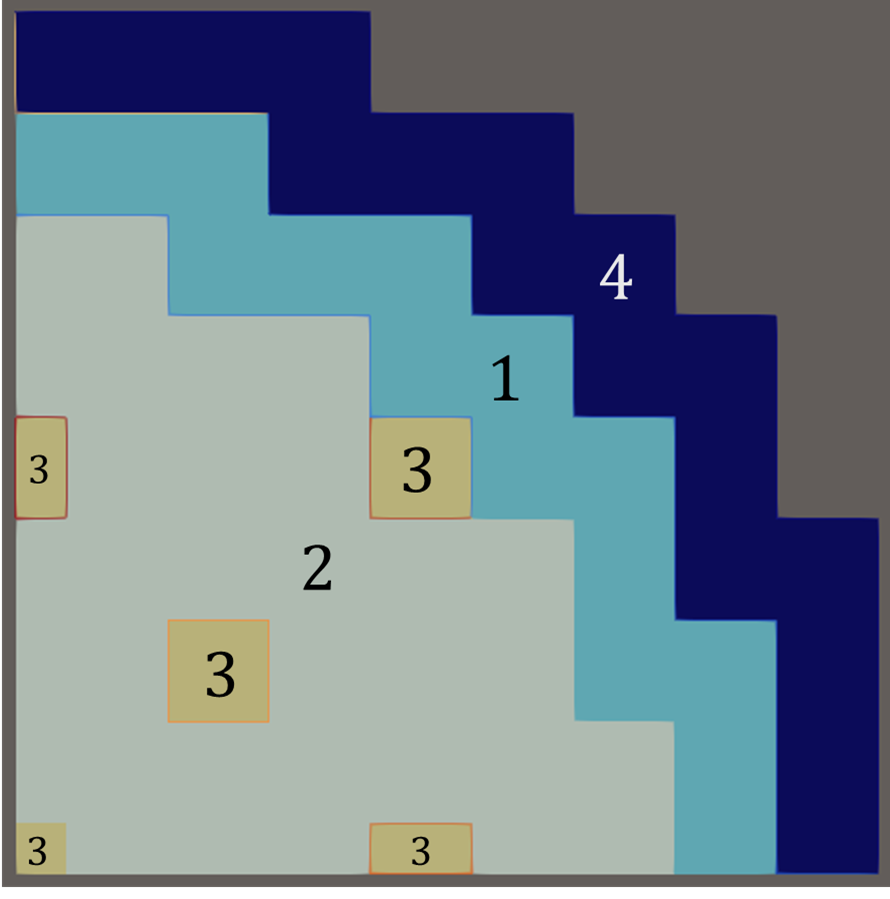}
        \caption{Horizontal Cross Section}
    \end{subfigure}
    \hspace{1.5cm} 
    \begin{subfigure}[t]{0.4\textwidth}
        \centering
        \includegraphics[width=0.5\textwidth]{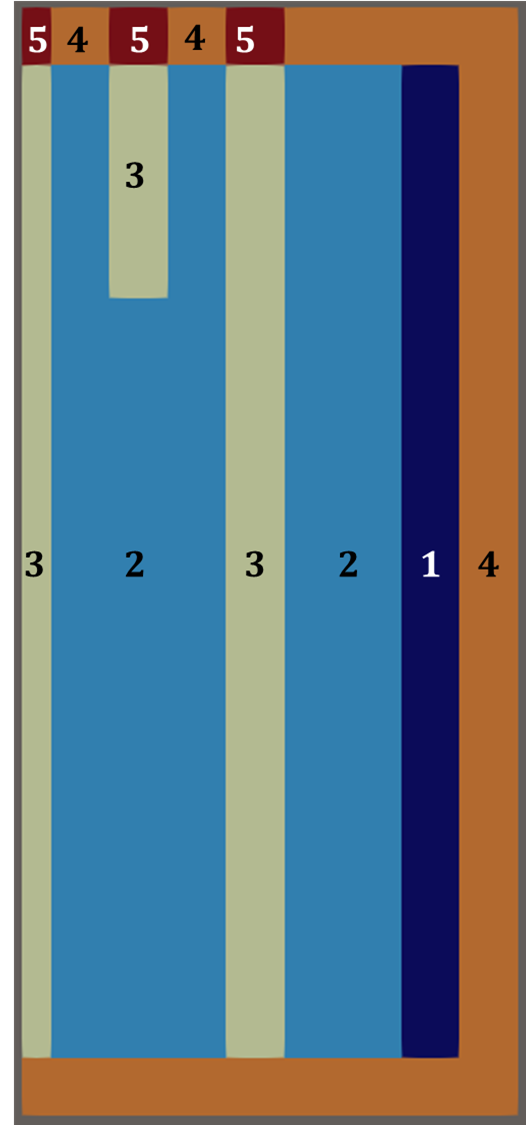}
        \caption{Vertical Cross Section}
    \end{subfigure}
    \caption{Results with IAEA 2D benchmark}
    \label{fig:geo-iaea}
\end{figure}
We impose  Robin boundary conditions for the external boundary, as follows,
\begin{equation*}
    \frac{\partial\phi_1}{\partial n} = \frac{-0.4692}{D_1}\phi_1\quad\text{and}\quad \frac{\partial\phi_2}{\partial n} = \frac{-0.4692}{D_2}\phi_2,\quad \text{on }\partial\Omega.
\end{equation*}

\begin{table}[H]
    \centering
    \begin{tabular}{c|c|c|c|c|c|c|l}\toprule
         Region&  $D_1$&  $D_2$&  $\Sigma_{1\to2}$&  $\Sigma_{a1}$&  $\Sigma_{a2}$& $\nu\Sigma_{f1}$ &$\nu\Sigma_{f2}$\\\midrule
         1& 1.5 & 0.4 & 0.02 & 0.01 & 0.080 & 0 & 0.135 \\
         2& 1.5 & 0.4 & 0.02 & 0.01 & 0.085 & 0 & 0.135 \\
         3& 1.5 & 0.4 & 0.02 & 0.01 & 0.130 & 0 & 0.135 \\
         4& 2.0 & 0.3 & 0.04 & 0    & 0.01  & 0 & 0 \\ 
 5& 2.0& 0.3& 0.04& 0& 0.055& 0&0\\\bottomrule
    \end{tabular}
    \caption{IAEA benchmark constants}
    \label{tab:cross-ct}
\end{table}
The aim of this Benchmark is to visualize the spacial distribution of the neutron fast and thermal flux. 

\begin{figure}[H]
    \centering
    \begin{subfigure}[t]{0.38\textwidth}
        \centering
        \includegraphics[width=\textwidth]{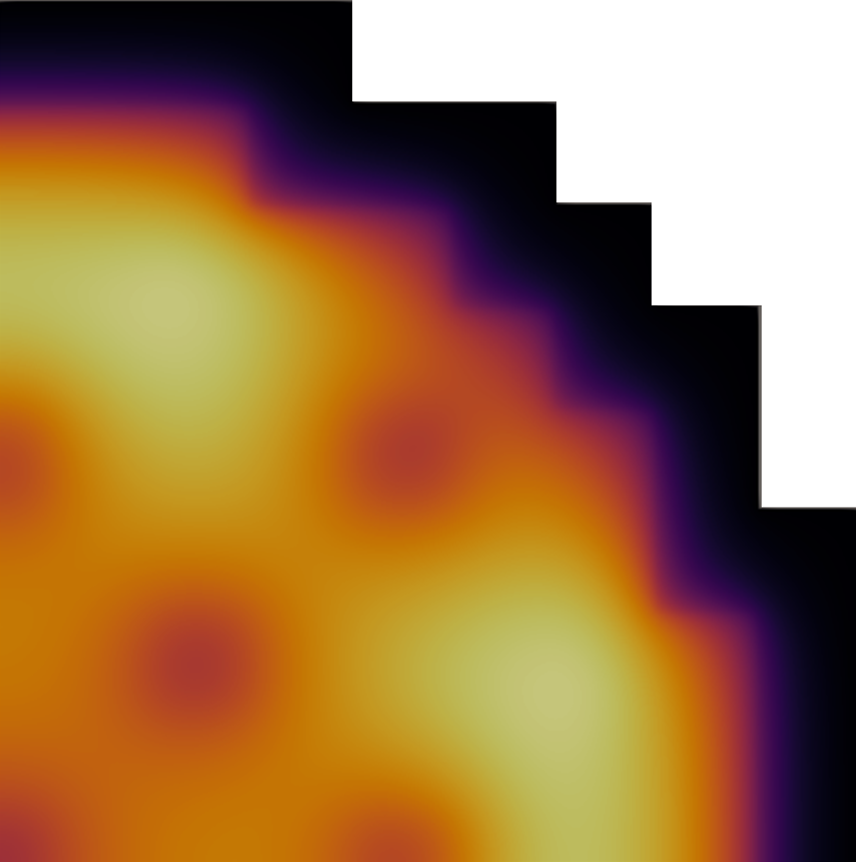}
        \caption{Fast group results}
    \end{subfigure}
    \hspace{1.5cm} 
    \begin{subfigure}[t]{0.38\textwidth}
        \centering
        \includegraphics[width=\textwidth]{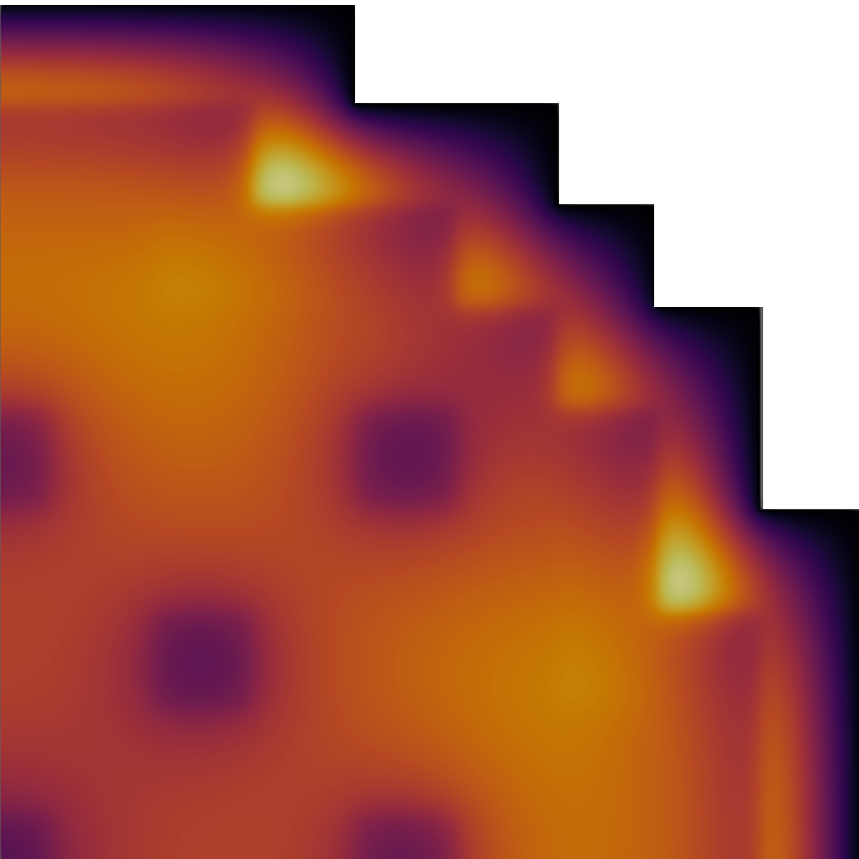}
        \caption{Thermal group results}
    \end{subfigure}
    \caption{Results with IAEA 2D benchmark for horizontal cross section}
    \label{fig:two_images_square_horizontal}
\end{figure}
\begin{figure}[H]
    \centering
    \begin{subfigure}[t]{0.4\textwidth}
        \centering
        \includegraphics[width=0.5\textwidth]{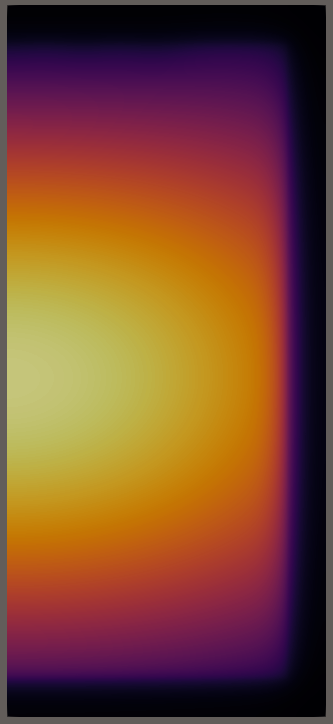}
        \caption{Fast group results}
    \end{subfigure}
    \hspace{1.5cm} 
    \begin{subfigure}[t]{0.4\textwidth}
        \centering
        \includegraphics[width=0.5\textwidth]{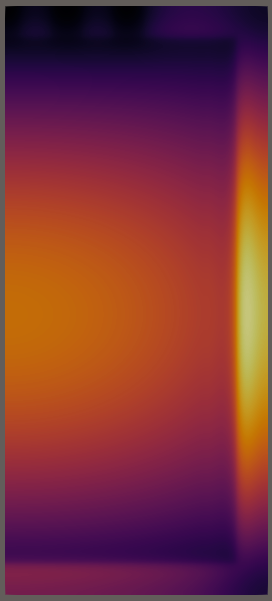}
        \caption{Thermal group results}
    \end{subfigure}
    \caption{Results with IAEA 2D benchmark for vertical cross section}
    \label{fig:two_images_square_vertical}
\end{figure}

With this simulation, we have obtained a multiplication factor $k = 0.9814$. This value is close to 1, indicating that the nuclear chain reaction is approximately stable (i.e., the system is near criticality). The results obtained are similar as the ones from literature, such as in \cite{iaea2}. These results indicate that both Fast and Thermal Neutron Fluxes reach higher concentration in the interior of the core that decreases towards the reactor boundary. The Fast Neutron shows a smooth and homogeneous  distribution, whereas the Thermal Neutron Flux is more heterogeneous and sows a higher sensitivity to the nuclear rods conditions.

\bibliographystyle{siamplain}
\bibliography{oseen-eigenvalue}

\appendix
\section{Fast Group derivation}
\label{appendix-a}
We show the derivation for the Fast Group Equation, starting from the general Neutron Transport Equation.

We will simplify each one of the  terms from the equation, 

\begin{equation*}
\begin{aligned}
&\underbrace{\int_{E_1}^{E_2}\Sigma_a(\epsilon)\,\phi(\vec{r},\epsilon)\,d\epsilon}_{\text{Absorption}}
+ \underbrace{\int_{E_1}^{E_2}\nabla_r \cdot D(\epsilon)\nabla_r\phi(\vec{r},\epsilon)\,d\epsilon}_{\text{Leakage}} 
+ \underbrace{\int_{E_1}^{E_2}\int_{E}\Sigma_s(\epsilon\to\epsilon')\phi(\vec{r},\epsilon)\,d\epsilon'd\epsilon}_{\text{Out-scattering}} \\
&= \underbrace{\int_{E_1}^{E_2}\int_{E}\Sigma_s(\epsilon'\to\epsilon)\phi(\vec{r},\epsilon')\,d\epsilon'}_{\text{In-scattering}} + \underbrace{\int_{E_1}^{E_2}\chi(\epsilon)\,d\epsilon\,\frac{1}{k}\int_{E}\nu(\epsilon')\Sigma_f(\epsilon') \phi(\vec{r},\epsilon')\,d\epsilon'}_{\text{Fission}} .
\end{aligned}
\end{equation*}

 by using the constant cross sections defined in \ref{constant-cross}.

\paragraph{Absorption Term}
\begin{equation}
\int_{E_1}^{E_2}\Sigma_a(\epsilon)\phi(\vec{r},\epsilon)\,d\epsilon=\frac{\displaystyle\int_{E_1}^{E_2}\Sigma_a(\epsilon) \phi(\vec{r},\epsilon)\,d\epsilon}{\displaystyle\int_{E_1}^{E_2}\phi(\vec{r},\epsilon)\,d\epsilon}\int_{E_1}^{E_2}\phi(\vec{r},\epsilon)\,d\epsilon = \Sigma_{f1}\phi_1.
\end{equation}
\paragraph{Leakage Term}

\begin{equation}
\begin{aligned}
    \int_{E_1}^{E_2}\nabla_r \cdot (D(\epsilon)\nabla_r\phi(\vec{r},\epsilon))\,d{\epsilon} &= \nabla_r \cdot  \int_{E_1}^{E_2}D(\epsilon)\nabla_r\phi(\vec{r},\epsilon)\,d{\epsilon}\\
    &=\nabla_r \cdot  \frac{\displaystyle\int_{E_1}^{E_2}D(\epsilon)\nabla_r\phi(\vec{r},\epsilon)\,d{\epsilon}}{\displaystyle\int_{E_1}^{E_2}\nabla_r\phi(\vec{r},\epsilon)\,d{\epsilon}}\int_{E_1}^{E_2}\nabla_r\phi(\vec{r},\epsilon)\,d{\epsilon}\\
    &=\nabla_r \cdot  D_1\nabla_r\phi_1.
\end{aligned}
\end{equation}
\paragraph{Out-Scattering Term}
\begin{equation}
\begin{aligned}
   & \displaystyle\int_{E_1}^{E_2}\int_{E }\Sigma_s(\epsilon\to\epsilon')\phi(\vec{r},\epsilon)\,d\epsilon'd\epsilon \\&= \displaystyle\int_{E_1}^{E_2}\int_{E_0 }^{E_1}\Sigma_s(\epsilon\to\epsilon')\phi(\vec{r},\epsilon)\,d\epsilon'd\epsilon+\int_{E_1}^{E_2}\displaystyle\int_{E_1 }^{E_2}\Sigma_s(\epsilon\to\epsilon')\phi(\vec{r},\epsilon)\,d\epsilon'd\epsilon \\
    &= \frac{\displaystyle\int_{E_1}^{E_2}\int_{E_0 }^{E_1}\Sigma_s(\epsilon\to\epsilon')\phi(\vec{r},\epsilon)\,d\epsilon'd\epsilon}{\displaystyle\int_{E_1}^{E_2}\phi(\vec{r},\epsilon)\,d\epsilon}\displaystyle\int_{E_1}^{E_2}\phi(\vec{r},\epsilon)\,d\epsilon'd\epsilon\\&\quad+\frac{\displaystyle\int_{E_1}^{E_2}\int_{E_1 }^{E_2}\Sigma_s(\epsilon\to\epsilon')\phi(\vec{r},\epsilon)\,d\epsilon'd\epsilon}{\displaystyle\int_{E_1}^{E_2}\phi(\vec{r},\epsilon)\,d\epsilon}\int_{E_1}^{E_2}\phi(\vec{r},\epsilon)\,d\epsilon'd\epsilon\\
    &= \Sigma_{1\to2}\phi_1 + \Sigma_{1\to1}\phi_1.
\end{aligned}
\end{equation}
\paragraph{Fission Term}
\begin{footnotesize}
\begin{align*}
    &\int_{E_1}^{E_2}\chi(\epsilon)\,d\epsilon\,\frac{1}{k}\int_{E}\nu(\epsilon')\Sigma_f(\epsilon') \phi(\vec{r},\epsilon')\,d\epsilon' \\&= \frac{1}{k}\int_{E}\nu(\epsilon')\Sigma_f(\epsilon') \phi(\vec{r},\epsilon')\,d\epsilon'\\
    &=\frac{1}{k}\left(\int_{E_0}^{E_1}\nu(\epsilon')\Sigma_f(\epsilon') \phi(\vec{r},\epsilon')\,d\epsilon'+\int_{E_1}^{E_2}\nu(\epsilon')\Sigma_f(\epsilon') \phi(\vec{r},\epsilon')\,d\epsilon'\right)\\
    &=\frac{1}{k}\left(\dfrac{\displaystyle\int_{E_0}^{E_1}\nu(\epsilon')\Sigma_f(\epsilon') \phi(\vec{r},\epsilon')\,d\epsilon'}{\int_{E_0}^{E_1}\phi(\vec{r},\epsilon)d\epsilon}\displaystyle\int_{E_0}^{E_1}\phi(\vec{r},\epsilon)d\epsilon+\frac{\displaystyle\int_{E_1}^{E_2}\nu(\epsilon')\Sigma_f(\epsilon') \phi(\vec{r},\epsilon')\,d\epsilon'}{\displaystyle\int_{E_1}^{E_2}\phi(\vec{r},\epsilon)d\epsilon}\displaystyle\int_{E_1}^{E_2}\phi(\vec{r},\epsilon)d\epsilon\right)\\
    &=\frac{1}{k}\left( \nu_1\Sigma_{f1}\phi_1+\nu_2\Sigma_{f2}\phi_2\right).
    \end{align*}
\end{footnotesize}
    
    \paragraph{In-Scattering Term}
\begin{equation}
\begin{aligned}
   & \displaystyle\int_{E_1}^{E_2}\int_{E }\Sigma_s(\epsilon'\to\epsilon)\phi(\vec{r},\epsilon')\,d\epsilon'd\epsilon \\&= \displaystyle\int_{E_1}^{E_2}\int_{E_0 }^{E_1}\Sigma_s(\epsilon'\to\epsilon)\phi(\vec{r},\epsilon')\,d\epsilon'd\epsilon+\displaystyle\int_{E_1}^{E_2}\displaystyle\int_{E_1 }^{E_2}\Sigma_s(\epsilon'\to\epsilon)\phi(\vec{r},\epsilon')\,d\epsilon'd\epsilon \\
    &= \frac{\displaystyle\int_{E_1}^{E_2}\int_{E_0 }^{E_1}\Sigma_s(\epsilon'\to\epsilon)\phi(\vec{r},\epsilon')\,d\epsilon'd\epsilon}{\int_{E_1}^{E_2}\phi(\vec{r},\epsilon)\,d\epsilon}\int_{E_1}^{E_2}\phi(\vec{r},\epsilon)\,d\epsilon'd\epsilon\\&\quad+\frac{\displaystyle\int_{E_1}^{E_2}\displaystyle\int_{E_1 }^{E_2}\Sigma_s(\epsilon'\to\epsilon)\phi(\vec{r},\epsilon')\,d\epsilon'd\epsilon}{\displaystyle\int_{E_1}^{E_2}\phi(\vec{r},\epsilon)\,d\epsilon}\displaystyle\int_{E_1}^{E_2}\phi(\vec{r},\epsilon)\,d\epsilon'd\epsilon\\
    &= \Sigma_{2\to1}\phi_1 + \Sigma_{1\to1}\phi_1,
\end{aligned}
\end{equation}
This will give us the following equivalence,
    \begin{multline*}
    -\nabla\cdot(D_1\nabla \phi_1) + \Sigma_{a1}\phi_1+\Sigma_{1\to2}\phi_1 +\Sigma_{1\to1}\phi_1\\
    = \Sigma_{2\to1}\phi_1+\Sigma_{1\to1}\phi_1+\frac{1}{k}(\nu_1\Sigma_{f1}\phi_1+\nu_2\Sigma_{f2}\phi_2).
\end{multline*}
We cancel at each side the term $\Sigma_{1\to1}\phi_1$ and as we said before, we neglect the term $\Sigma_{2\to1}$, this give us the following equation for the fast neutron group.
\begin{equation}
    -\nabla\cdot(D_1\nabla \phi_1) + (\Sigma_{a1}+\Sigma_{1\to2})\phi_1 = \frac{1}{k}(\nu_1\Sigma_{f1}\phi_1+\nu_2\Sigma_{f2}\phi_2).
\end{equation}

\newpage
\section{Thermal Group derivation}
\label{appendix-b}
We show the derivation for the Fast Group Equation, starting from the general Neutron Transport Equation.

We will simplify each one of the  terms from the equation, 
\begin{equation*}
\begin{aligned}
&\underbrace{\int_{E_0}^{E_1}\Sigma_a(\epsilon)\,\phi(\vec{r},\epsilon)\,d\epsilon}_{\text{Absorption}}
+ \underbrace{\int_{E_0}^{E_1}\nabla \cdot D(\epsilon)\nabla\phi(\vec{r},\epsilon)\,d\epsilon}_{\text{Leakage}}+ \underbrace{\int_{E_0}^{E_1}\int_{E}\Sigma_s(\epsilon\to\epsilon')\phi(\vec{r},\epsilon)\,d\epsilon'd\epsilon}_{\text{Out-Scattering}} \\
&= \underbrace{\int_{E_0}^{E_1}\int_{E}\Sigma_s(\epsilon'\to\epsilon)\phi(\vec{r},\epsilon')\,d\epsilon'}_{\text{In-Scattering}}+ \underbrace{\int_{E_0}^{E_1}\chi(\epsilon)\,d\epsilon \,\frac{1}{k}\int_{E}\nu(\epsilon')\Sigma_f(\epsilon')\,\phi(\vec{r},\epsilon')\,d\epsilon'}_{\text{Fission}} .
\end{aligned}
\end{equation*}
by using the constant cross sections defined in \ref{constant-cross}.
\paragraph{Absorption Term}
\begin{equation}
\int_{E_0}^{E_1}\Sigma_a(\epsilon)\phi(\vec{r},\epsilon)\,d\epsilon=\frac{\displaystyle\int_{E_0}^{E_1}\Sigma_a(\epsilon) \phi(\vec{r},\epsilon)\,d\epsilon}{\displaystyle\int_{E_0}^{E_1}\phi(\vec{r},\epsilon)\,d\epsilon}\displaystyle\int_{E_0}^{E_1}\phi(\vec{r},\epsilon)\,d\epsilon = \Sigma_{f2}\phi_2.
\end{equation}
\paragraph{Leakage Term}
\begin{equation}
\begin{aligned}
    \int_{E_0}^{E_1}\nabla \cdot D(\epsilon)\nabla\phi(\vec{r},\epsilon)\,d{\epsilon} &= \nabla \cdot  \int_{E_0}^{E_1}D(\epsilon)\nabla\phi(\vec{r},\epsilon)\,d{\epsilon}\\
    &=\nabla \cdot  \frac{\displaystyle\int_{E_0}^{E_1}D(\epsilon)\nabla\phi(\vec{r},\epsilon)\,d{\epsilon}}{\displaystyle\int_{E_0}^{E_1}\nabla\phi(\vec{r},\epsilon)\,d{\epsilon}}\int_{E_0}^{E_1}\nabla\phi(\vec{r},\epsilon)\,d{\epsilon}\\
    &=\nabla \cdot  D_2\nabla\phi_2.
\end{aligned}
\end{equation}
\paragraph{Out-Scattering Term}
\begin{equation}
\begin{aligned}
   & \displaystyle\int_{E_0}^{E_1}\int_{E }\Sigma_s(\epsilon\to\epsilon')\phi(\vec{r},\epsilon)\,d\epsilon'd\epsilon \\&= \displaystyle\int_{E_0}^{E_1}\int_{E_0 }^{E_1}\Sigma_s(\epsilon\to\epsilon')\phi(\vec{r},\epsilon)\,d\epsilon'd\epsilon+\displaystyle\int_{E_0}^{E_1}\int_{E_1 }^{E_2}\Sigma_s(\epsilon\to\epsilon')\phi(\vec{r},\epsilon)\,d\epsilon'd\epsilon \\
    &= \frac{\displaystyle\int_{E_0}^{E_1}\int_{E_0 }^{E_1}\Sigma_s(\epsilon\to\epsilon')\phi(\vec{r},\epsilon)\,d\epsilon'd\epsilon}{\displaystyle\int_{E_0}^{E_1}\phi(\vec{r},\epsilon)\,d\epsilon}\displaystyle\int_{E_0}^{E_1}\phi(\vec{r},\epsilon)\,d\epsilon'd\epsilon\\&\quad+\frac{\displaystyle\int_{E_0}^{E_1}\displaystyle\int_{E_1 }^{E_2}\Sigma_s(\epsilon\to\epsilon')\phi(\vec{r},\epsilon)\,d\epsilon'd\epsilon}{\displaystyle\int_{E_0}^{E_1}\phi(\vec{r},\epsilon)\,d\epsilon}\int_{E_0}^{E_1}\phi(\vec{r},\epsilon)\,d\epsilon'd\epsilon\\
    &= \Sigma_{2\to2}\phi_2 + \Sigma_{2\to1}\phi_2.
\end{aligned}
\end{equation}
\paragraph{Fission Term}

\begin{align}
    \int_{E_0}^{E_1}\chi(\epsilon)\,d\epsilon\,\frac{1}{k}\int_{E}\nu(\epsilon')\Sigma_f(\epsilon') \phi(\vec{r},\epsilon')\,d\epsilon' = 0.
\end{align}

    \paragraph{In-Scattering Term}
\begin{equation}
\begin{aligned}
   & \int_{E_0}^{E_1}\int_{E }\Sigma_s(\epsilon'\to\epsilon)\phi(\vec{r},\epsilon')\,d\epsilon'd\epsilon \\&= \int_{E_0}^{E_1}\int_{E_0 }^{E_1}\Sigma_s(\epsilon'\to\epsilon)\phi(\vec{r},\epsilon')\,d\epsilon'd\epsilon+\int_{E_0}^{E_1}\int_{E_1 }^{E_2}\Sigma_s(\epsilon'\to\epsilon)\phi(\vec{r},\epsilon')\,d\epsilon'd\epsilon \\
    &= \frac{\displaystyle\int_{E_0}^{E_1}\int_{E_0 }^{E_1}\Sigma_s(\epsilon'\to\epsilon)\phi(\vec{r},\epsilon')\,d\epsilon'd\epsilon}{\displaystyle\int_{E_0}^{E_1}\phi(\vec{r},\epsilon)\,d\epsilon}\displaystyle\int_{E_0}^{E_1}\phi(\vec{r},\epsilon)\,d\epsilon'd\epsilon\\&\quad+\frac{\displaystyle\int_{E_0}^{E_1}\int_{E_1 }^{E_2}\Sigma_s(\epsilon'\to\epsilon)\phi(\vec{r},\epsilon')\,d\epsilon'd\epsilon}{\displaystyle\int_{E_0}^{E_1}\phi(\vec{r},\epsilon)\,d\epsilon}\int_{E_0}^{E_1}\phi(\vec{r},\epsilon)\,d\epsilon'd\epsilon\\
    &= \Sigma_{2\to2}\phi_2 + \Sigma_{1\to2}\phi_1.
\end{aligned}
\end{equation}
This will give us the following equivalency,
\begin{equation*}
    -\nabla\cdot(D_2\nabla \phi_2) + \Sigma_{a2}\phi_2-\Sigma_{1\to2}\phi_1 = 0.
\end{equation*}
\section{Real Eigenvalues Lemma}
\label{apendix-c}
\begin{lemma}
\label{lemma-apc}
    Let $X$ be a Hilbert space and let  $\mathcal{L}$ and $\mathcal{R}$ be self-adjoint operators defined from $X$ to $X'$. Consider the generalized eigenvalue problem: find $(\lambda, u)\in\mathbb{C}\times X$, with $u\neq 0$,  such that, 
    \begin{equation}
    \label{eq:LR_eigen}
        \mathcal{L}x = \lambda\mathcal{R}x.
    \end{equation} If one of them, $\mathcal{L}$ or $\mathcal{R}$ is positive definite, then  $\lambda\in\R$. Moreover, if both of them are positive definite, then $\lambda>0$.

    \begin{proof}
    Let us consider the   sesquilinear forms induced by  operators $\mathcal{L}$ and $\mathcal{R}$, defined as follows
    \begin{equation*}
        \ell(u,v) \coloneqq\langle \mathcal{L}u,v\rangle\quad\text{and}\quad
        r(u,v)  \coloneqq\langle \mathcal{R}u,v\rangle.
    \end{equation*}


    With these sesquilinear forms at hand, we rewrite problem \eqref{eq:LR_eigen} as follows: find  $(\lambda,u)\in\mathbb{C}\times X$, with $u\neq 0$,  such that 
    \begin{equation*}
        \ell(u,v) = \lambda \,r(u,v),\quad \forall v\in X.
    \end{equation*}
    Choosing $v = u$ and using the hermitian properties of both sesquilinear forms, it holds that $\ell(u,u) = \overline{\ell (u,u)}$ and $r(u,u) = \overline{r(u,u)}$, so that $\ell(u,u), r(u,u)  \in \mathbb{R}$. If we consider $r(u,v)$ positive definite, then it is possible to write $ \lambda =\ell(u,u)/r(u,u)$,
    so that $\lambda \in \R$. If both $\ell(u,v)$ and $r(u,v)$ are positive definite, it holds that $\lambda > 0$.
\end{proof}
\end{lemma}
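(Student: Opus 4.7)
The plan is to exploit the one-to-one correspondence between self-adjoint operators from $X$ to $X'$ and Hermitian sesquilinear forms, so that testing the eigenvalue equation against the eigenfunction itself produces real quantities and forces $\lambda$ to be a ratio of two reals.

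First I would introduce the sesquilinear forms
\begin{equation*}
\ell(u,v) \coloneqq \langle \mathcal{L}u, v\rangle_{X',X}, \qquad r(u,v) \coloneqq \langle \mathcal{R}u, v\rangle_{X',X}.
\end{equation*}
The self-adjointness assumption on $\mathcal{L}$ and $\mathcal{R}$ as operators from $X$ into $X'$ translates into the Hermitian symmetry $\ell(u,v) = \overline{\ell(v,u)}$ and $r(u,v) = \overline{r(v,u)}$ for every $u,v \in X$. In particular, taking $v = u$ yields $\ell(u,u) = \overline{\ell(u,u)}$ and $r(u,u) = \overline{r(u,u)}$, so $\ell(u,u), r(u,u) \in \mathbb{R}$ for all $u \in X$.

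Next I would rewrite the generalized eigenvalue problem \eqref{eq:LR_eigen} in weak form: $(\lambda,u) \in \mathbb{C} \times X$ with $u \neq 0$ satisfies
\begin{equation*}
\ell(u,v) = \lambda\, r(u,v) \qquad \forall v \in X.
\end{equation*}
Choosing $v = u$ gives $\ell(u,u) = \lambda\, r(u,u)$. Assume, without loss of generality, that $\mathcal{R}$ is positive definite; then $r(u,u) > 0$ because $u \neq 0$, so the scalar
\begin{equation*}
\lambda = \frac{\ell(u,u)}{r(u,u)}
\end{equation*}
is a quotient of two real numbers and hence $\lambda \in \mathbb{R}$. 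The case where only $\mathcal{L}$ is positive definite is completely analogous: then $\ell(u,u) > 0$ forces $r(u,u) \neq 0$ (otherwise the identity $\ell(u,u) = \lambda r(u,u)$ is contradicted), and in fact $r(u,u) \in \mathbb{R}$, so $\lambda = \ell(u,u)/r(u,u) \in \mathbb{R}$. If both forms are positive definite, then both $\ell(u,u)$ and $r(u,u)$ are strictly positive, and the quotient gives $\lambda > 0$.

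The argument is essentially self-contained once the Hermitian symmetry is in place, so I do not anticipate a serious obstacle. The only subtle point is the interpretation of self-adjointness for operators taking values in the dual $X'$; I would be careful to phrase this via the associated sesquilinear form (rather than through an unbounded self-adjoint operator on $X$ in the classical sense), since this is exactly what is needed to extract the real-valuedness of the diagonal quantities $\ell(u,u)$ and $r(u,u)$.
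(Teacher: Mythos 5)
Your proof is correct and follows essentially the same route as the paper: pass to the induced Hermitian sesquilinear forms, test with $v=u$ to get $\ell(u,u)=\lambda\,r(u,u)$ with both diagonal values real, and divide. In fact you are slightly more complete than the paper, since you also treat the case where only $\mathcal{L}$ is positive definite by observing that $\ell(u,u)>0$ forces $r(u,u)\neq 0$, a case the paper's proof leaves implicit.
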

\end{document}